\numberwithin{equation}{section}
\def\pa{\partial}
\newcommand{\R}{\mathbb{R}}
\let\Re\relax
\let\Im\relax
\DeclareMathOperator{\Re}{\text{Re}}
\DeclareMathOperator{\Im}{\text{Im}}
\newtheorem{theorem}{Theorem}[section]
\newtheorem{lemma}[theorem]{Lemma}
\theoremstyle{definition}
\newtheorem{remark}[theorem]{Remark}
\newcommand{\beq}{\begin{equation}}
\newcommand{\eeq}{\end{equation}}
\newcommand{\beqq}{\begin{equation*}}
\newcommand{\eeqq}{\end{equation*}}
\theoremstyle{remark}
\newcommand{\Extend}[5]{\ext@arrow0099{\arrowfill@#1#2#3}{#4}{#5}}
\def\leq{\leqslant}
\def\geq{\geqslant}
\def\le{\leqslant}
\def\ge{\geqslant}
\begin{document}
\hspace{1em}
\title[Schr\"odinger equation]{ \bf Long time behaviors for the inhomogeneous NLS with a potential in $\mathbb{R}^3$}
\author{Fanfei Meng}
\address{Qiyuan Lab, Tsinghua University, Beijing 100095, P.R. China.}
\email{mengfanfei@qiyuanlab.com}
\author{Sheng Wang}
\address{Shanghai Center for Mathematical Sciences, Fudan University, Shanghai 200433, P.R. China.}
\email{19110840011@fudan.edu.cn}
\author{Chengbin Xu}
\address{School of Mathematics and Statistics, Qinghai Normal University, Xining, Qinghai 810008, P.R. China.}
\email{xuchengbin19@gscaep.ac.cn}

\begin{abstract}
In this article, we aim to study the scattering of the solution to the focusing inhomogeneous nonlinear Schr\"odinger equation with a potential of form
\begin{align*}
  i\partial_t u+\Delta u- Vu=-|x|^{-b}|u|^{p-1}u
\end{align*}
in the energy space $H^1(\R^3)$. We prove a scattering criterion, and then we use it together with Morawetz estimate to show the scattering theory, which generalizes the results of Dinh \cite{DD} to the non-radial symmetric case.
\\[0.6em]
 \textbf{Key Words:} Nonlinear Schr\"odinger equation; Kato class potential; Virial-Morawetz estimate; Scattering theory.
\\[0.6em]

\end{abstract}
\maketitle
\section{Introduction}\label{section-intro}
We consider the Cauchy problem of the inhomogeneous nonlinear Schr\"odinger equation
with a potential:
 \begin{align}\label{INLC}
 \begin{cases}
   &i\pa_tu+\Delta u- Vu=-|x|^{-b}|u|^{p-1}u,\ \ \ \  (t,x)\in \R\times\R^3 \\
   &u(0,x)=u_0(x) \in H^1(\R^3),
 \end{cases}
 \tag{\text{$\rm{INLS_V}$}}
 \end{align}
where $u:\R\times\R^3\rightarrow\mathbb{C}$, $p>1$, $0<b<1$,
and $V$ is a real-valued potential satisfying
\begin{align}\label{ASU1}
 V \in {\mathcal{K}_0}\cap L^{\frac{3}{2}},
\end{align}
and
 \begin{align}\label{ASU2}
\|V_{-} \|_{\mathcal{K}_0} < 4\pi.
\end{align}
Here, $\mathcal{K}_0$ denotes the closed space
of bounded and compactly supported functions
endowed with the Kato norm
$$ \|V\|_{\mathcal{K}_0} := \sup\limits_{x\in\mathbb{R}^3} \displaystyle \int_{\mathbb{R}^3}\frac{|V(y)|}{|x-y|} {\rm d}y $$
with $V_{-}:=\min\{V,0\}$ being the negative part of $V$.
We recall from \cite{DD,Ho} that
the operator $\mathcal{H}:= -\Delta + V$ has no eigenvalues,
and the dispersive and Strichartz estimates hold
for the corresponding Schr\"odinger flow $\{e^{-it\mathcal{H}}\}$
under the conditions \eqref{ASU1} and \eqref{ASU2}.

It is known that solutions to \eqref{INLC} with suitable regularity
conserve the mass and energy, defined respectively by
$$ M(u) := \int_{\R^3} |u(t,x)|^2 {\rm d}x = M(u_0), $$
$$ E(u) := \displaystyle \frac12 \int_{\R^3} |\nabla u(t,x)|^2 {\rm d}x + \displaystyle \frac12 \int_{\R^3} V(x)|u(t,x)|^2 {\rm d}x -\displaystyle \frac1{p+1} \int_{\R^3} \frac{|u(t,x)|^{p+1}}{\vert x \vert^b} {\rm d}x = E(u_0). $$

Firstly, let us recall the existence results for
the inhomogenous  Schr\"odinger equation without potentials:
\begin{align}
\begin{cases}
&i\pa_tu+\Delta u=\lambda|x|^{-b}|u|^{p-1}u,\ \ \ \  (t,x)\in \R\times\R^{d} \\
&u(0,x)=u_0(x),
\end{cases}
\label{INLS}
\tag{\text{$\rm{INLS}$}}
\end{align}
where $\lambda=1$(resp. $\lambda=-1$) corresponds to the defocusing(resp. focusing) case.
Equation \eqref{INLS} is invariant under the scaling
$$u_{\lambda}(x,t):= \lambda^{\frac{2-b}{p-1}} u(\lambda x, \lambda^2t), \quad \lambda > 0$$
which preserves the
 $\dot H^{s_c}(\R^d)$-norm with $s_c := \frac{d}{2} - \frac{2-b}{p-1}.$
Thus, the equation \eqref{INLS} is called $\dot H^{s_c}(\R^d)$-critical.
The case where $s_c=1$(resp. $s_c=0$) is called energy(resp. mass)-critical,
and the case where $0<s_c <1$ is called intercritical.

Genoud-Stuart \cite{GS} firstly obtained the $H^1$ well-posedness
by using an abstract theory of Cazenave \cite{Ca} for $d\geq1$ and $0<b<\min\{2,d\}$.
Later, Guzm\'an \cite{MG} used Strichartz estimates and the standard Picard iterative technique to revisit the local well-posedness in energy space $H^1$ with $d\geq2, 0<b<b^*$($b^*=\frac{d}3, d=2, 3$ or $b^*=2$ for $d\geq4$). Dinh \cite{DV} extended the results of  Guzm\'an in dimension three for $0<b<\frac32$ and $\frac{7-2b}{3}<p<\frac{5-2b}{2b-1}$.

In the focusing case where $\lambda =-1$,
equation \eqref{INLS} admits a global but non-scattering solution
$$u(t,x)=e^{it}Q(x), $$
where $Q$ is the ground state,
that is, the unique positive radial solution to the elliptic equation (see \cite{CC, DD})
\begin{align}\label{Var-1}
\Delta Q-Q+|x|^{-b}|Q|^{p-1}Q=0.
\end{align}
Genoud \cite{Gen} considered \eqref{INLS} of mass critical and proved the global well-posedness for the $H^1$ initial date
below the ground state, i.e., $\|u_0\|_{L^2}\le\|Q\|_{L^2}$.
In \cite{Far}, Farah proved the global well-posed in $H^1$ below the ground state in the intercritical case.
Regarding the scattering,
Farah-Guzm\'an \cite{FG1} proved the energy scattering of radial solution
when $0<b<\frac{1}{2}$, $p=3$ and  $d=3$.
It was extended by
Miao-Murphy-Zheng \cite{MMZ} to the non-radial setting,
based on the concentration-compactness method developed by Kenig-Merle \cite{KM}.
Moreover,
Xu-Zhao \cite{XZ} proved the energy scattering with $0<b<1$ and $p>3-b$,
by adapting a new argument of Arora-Dodson-Murphy \cite{ADM} in dimension two.
We also refer to the works by Dinh \cite{Din}
and Farah\cite{Far} for the finite-time blow-up solutions.

Secondly, we turn to the development of nonlinear Schr\"odinger equations with potential:
\begin{align}\label{NLS_V}
	 \begin{cases}
	&i\pa_tu+\Delta u- Vu=-|u|^{p-1}u,\ \ \ \  (t,x)\in \R\times\R^d \\
	&u(0,x)=u_0(x),
	\end{cases}
	\tag{\text{$\rm{NLS_V}$}}
\end{align}
whose scattering behavior of solutions has been also studied.
Hong \cite{Ho} proved the energy scattering in the case where $p=3$,
by applying the concentration-compactness method.
And Hamano-Ikeda \cite{HI} extended it to the whole subcritical regime for $V$ satisfying \eqref{ASU1} and \eqref{ASU2}.
Recently, Miao-Murphy-Zheng\cite{MMZ2} studied
the threshold scattering for this model with a repulsive potential ($V$ in \eqref{NLS_V} satisfies \eqref{ASU1}, $V\geq0$ and $x\cdot\nabla V\leq 0$).
We also mention that, Miao-Zhang-Zheng\cite{MMZ} studied the Cauchy problem with Coulomb potential ($V=\frac{C}{|x|}$ in \eqref{NLS_V}) where $1<p\le5$ and $d=3$.
Moreover, the focusing \eqref{NLS_V} with inverse square potential ($V=\frac{a}{|x|^2}$ and $a\geq0$) was studied by Zheng \cite{Zh}.

Thirdly, there are also several works for the present \eqref{INLC}.
Deng-Lu-Meng \cite{DLM} studied the blow-up versus global well-posedness of \eqref{INLC} with inverse-square potential $ V = \frac{a}{\vert x \vert^2} $ for $ a > -\frac{(d-2)^2}4 $.
Dinh \cite{DD} and Guo-Wang-Yao \cite{GWY} proved
the global dynamics for a class $L^2$-supercritical nonlinear inhomogeneous Schr\"odinger equation
with a potential in dimension three with radially symmetric initial data.
Inspired by Campos and Cardoso \cite{CC} and Murphy \cite{Mu},
we study the scattering theory of \eqref{INLC} in the non-radial setting here.

To show the scattering theory of Theorem \ref{MR}, our strategy in this paper is to divide the whole proof into two steps.
And the first step is to establish a Marowetz type estimate stated as in Lemma \ref{de} and the second step is the scattering criterion as follows.

\begin{theorem}[Scattering criterion]\label{SC}
For $0<b<1$, $1+\frac{4-2b}{3}<p<1+4-2b$, and $V:\R^3\rightarrow\mathbb{R}$
satisfies \eqref{ASU1}, \eqref{ASU2} in \eqref{INLC}.
If $u \in H^1$ is a solution to \eqref{ASU1} defined on $[0,+\infty)$ with the following priori bound
	\begin{align}\label{pb}
		\sup\limits_{t} \|u(t)\|_{H^1}:=E < +\infty,
	\end{align}
then there exist two constants $R>0$ and $\epsilon >0$,
depending only on $E$, $p$ and b (but not on $u$ or $t$ ),
such that if
    \begin{align}\label{ASU3}
   	 \liminf\limits_{t \to +\infty } \displaystyle \int_{B(0,R)} |u(t,x)|^2 {\rm d}x \leq \epsilon^2,
   \end{align}
then u scatters forward in time in $H^1(\mathbb{R}^3)$,
that is,
there exists a function $u_+ \in H^1$ such that
$$\liminf_{t \to +\infty} \|u(t)-e^{-it\mathcal{H}}u_+\|_{H^1} =0.$$	
\end{theorem}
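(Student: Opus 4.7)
The plan is to adapt the Tao--Dodson--Murphy scattering-criterion argument to the inhomogeneous nonlinearity and to the perturbed Hamiltonian $\mathcal{H}=-\Delta+V$. The first step is to reformulate the conclusion: scattering in $H^1$ is equivalent to the Cauchy property of $\int_0^t e^{is\mathcal{H}}F(u(s))\,ds$ in $H^1$ as $t\to\infty$, where $F(u)=-|x|^{-b}|u|^{p-1}u$. By dual Strichartz estimates for $e^{-it\mathcal{H}}$, which hold under \eqref{ASU1}--\eqref{ASU2}, this reduces to showing that a space-time scattering norm $\|u\|_{L^q_t L^r_x([T,\infty))}$ is small for $T$ sufficiently large, where $(q,r)$ is a Strichartz pair calibrated to the scaling index $s_c$.

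Given $\epsilon,R$ as in the statement, I choose $T$ large so that $\int_{B(0,R)}|u(T,x)|^2\,dx\le 2\epsilon^2$. Writing the Duhamel formula centered at $T$,
\begin{equation*}
u(t)=e^{-i(t-T)\mathcal{H}}u(T)-i\int_T^t e^{-i(t-s)\mathcal{H}}F(u(s))\,ds,
\end{equation*}
I would first prove smallness of the linear term $e^{-i(t-T)\mathcal{H}}u(T)$ in the scattering norm on $[T,\infty)$, after which a continuity/bootstrap argument, using Strichartz together with the $H^1$ bound $E$ to estimate the nonlinear Duhamel integral in the dual Strichartz space, upgrades this to smallness of $u$ itself on $[T,\infty)$. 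To handle the linear profile, I would split $u(T)=\chi_{B(0,R)}u(T)+(1-\chi_{B(0,R)})u(T)$: the first piece has $L^2$ norm at most $2\epsilon$, so interpolation against the uniform $H^1$ ceiling yields smallness in the scattering norm. For the second piece, I would further expand via a Duhamel identity backward from $T$, splitting $[0,T]$ into $[0,T-T^\theta]$ and $[T-T^\theta,T]$ for a suitable $\theta\in(0,1)$: on the long window the $L^1\!\to\!L^\infty$ dispersive decay of $e^{-it\mathcal{H}}$ provides a factor $(t-s)^{-3/2}$ that yields smallness for large $T$, while on the short window Strichartz together with the $H^1$ bound and the length $T^\theta$ produces the complementary gain.

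The principal technical obstacle will be handling the simultaneous presence of the potential $V$ and the weight $|x|^{-b}$. The nonlinearity has to be bounded by splitting $\R^3$ into $\{|x|\le 1\}$ and $\{|x|\ge 1\}$: in the singular region one combines Hardy's inequality and Sobolev embedding with the assumption $0<b<1$ to absorb $|x|^{-b}$, while on $\{|x|\ge 1\}$ the weight is harmless and the $H^1$ bound of $u$ governs things. Balancing the Strichartz exponents so that the $L^p$ indices match on both sides while respecting admissibility in $\R^3$ and the constraints $1+\tfrac{4-2b}{3}<p<1+4-2b$ requires the most care, since the scattering norm is calibrated to the nontrivial index $s_c$; this couples the inhomogeneous weight and the nonlinear exponent in a delicate way and forces the intertwining of the Kato-class properties of $V$ with the weighted estimates for $F(u)$.
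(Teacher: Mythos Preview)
Your overall framework is the right one (Tao--Dodson--Murphy), and the treatment of the distant-in-time Duhamel piece via the $L^1\to L^\infty$ dispersive bound for $e^{-it\mathcal{H}}$ is essentially the paper's argument for $G_2$. But the rest of the decomposition is off in two places.

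First, the spatial splitting $u(T)=\chi_{B(0,R)}u(T)+(1-\chi_{B(0,R)})u(T)$ does not lead anywhere for the far piece: $(1-\chi_{B(0,R)})u(T)$ is not a solution, so there is no Duhamel identity for it, and the cutoff does not commute with $e^{-i(t-T)\mathcal{H}}$. What the paper (and the Dodson--Murphy scheme) actually does is apply Duhamel to $e^{-i(t-T)\mathcal{H}}u(T)$ as a whole, obtaining $e^{-it\mathcal{H}}u_0+iG_1+iG_2$; the free piece $e^{-it\mathcal{H}}u_0$ is small on $[T,\infty)$ just by Strichartz and choosing $T$ large, and the time integral is split into a near window $I_1=[T-\epsilon^{-\alpha},T]$ and a far window $I_2=[0,T-\epsilon^{-\alpha}]$.

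Second, and more seriously, your short-window argument is wrong: the window length $T^{\theta}$ (or $\epsilon^{-\alpha}$ in the paper's parametrization) is \emph{large}, so ``Strichartz $+$ the $H^1$ bound $+$ the length'' gives a quantity of order $T^{\theta/s'}E^{p}$, which blows up. The smallness on $I_1$ comes from two ingredients you did not mention. One is the almost-conservation of local mass, $\partial_t\int \eta_R|u|^2=O(R^{-1})$, which propagates the hypothesis $\int_{B(0,R)}|u(T)|^2\le\epsilon^2$ to all of $I_1$ provided $R\gtrsim \epsilon^{-(\alpha+2)}$. The other is a spatial splitting of the \emph{nonlinearity} (not of $u(T)$) at scale $R$: on $\{|x|\le R/2\}$ the propagated smallness of $\|\eta_R u\|_{L^2}$, interpolated against $H^1$, makes $\eta_R|x|^{-b}|u|^{p-1}u$ small in $L^{l'}_x$; on $\{|x|>R/2\}$ it is precisely the decay of the weight $|x|^{-b}$ that gives $\|(1-\eta_R)|x|^{-b}|u|^{p-1}u\|_{L^{l'}_x}\lesssim R^{-(bl_1-3)}$ and hence smallness. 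This last step is the whole point of the non-radial argument for INLS---it replaces the radial Strauss decay---and your proposed split $\{|x|\le 1\}$ versus $\{|x|\ge 1\}$ (aimed only at the origin singularity) misses it.
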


Next, we formulate the scattering result
for the solutions to \eqref{INLC}.
We shall use the Generalized Sobolev norms
\begin{align}
\|\Gamma f\|^2_{L^2}:= \displaystyle \int_{\R^3} |\nabla f|^2 {\rm d}x + \displaystyle \int_{\R^3} V|f|^2 {\rm d}x.
\end{align}

\begin{theorem} [Scattering] \label{MR}
Consider equation \eqref{INLC} with $0<b<1$ and $1+\frac{4-2b}{3}<p<1+4-2b.$
Assume that $V:\R^3\rightarrow\mathbb{R}$ satisfies \eqref{ASU1},
and in addition $V\ge 0$, $x\cdot\nabla V \leq 0$,
$x\cdot \nabla V \in L^{r}$, where $r\in [\frac{3}{2},\infty)$.
Let $u_0\in H^1$ satisfy
\begin{align}\label{q1}
	E(u_0)[M(u_0)]^{\frac{1-s_c}{s_c}}<\|\nabla Q\|_{L^2}\|Q\|^{\frac{1-s_c}{s_c}}_{L^2}.
\end{align}
Then, if
\begin{align}\label{q2}
	\|\Gamma u_0\|_{L^2}\|u_0\|^{\frac{1-s_c}{s_c}}_{L^2}<\|\nabla Q\|_{L^2}\|Q\|^{\frac{1-s_c}{s_c}}_{L^2},
\end{align}
we have that the corresponding solution $u$ to \eqref{INLC} exists globally in time
and satisfies
\begin{align} \label{q3}
\|\Gamma u(t)\|_{L^2}\|u(t)\|^{\frac{1-s_c}{s_c}}_{L^2}<\|\nabla Q\|_{L^2}\|Q\|^{\frac{1-s_c}{s_c}}_{L^2}
\end{align}
for all $t\in \mathbb{R}$. Moreover, the global solution scatters in $H^1$ in both time directions.
	
\end{theorem}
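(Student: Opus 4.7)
The argument splits into three stages: establishing a uniform $H^1$ bound via the sub-threshold coercivity, proving a localized virial--Morawetz estimate, and invoking the scattering criterion.

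\emph{Step 1 (coercivity and uniform $H^1$ bound).} Because $V\ge 0$, one has $\|\Gamma f\|_{L^2}^2\ge \|\nabla f\|_{L^2}^2$ for every $f\in H^1$, so the energy $E(u)$ dominates the pure focusing-NLS energy $\tfrac12\|\nabla u\|_{L^2}^2-\tfrac{1}{p+1}\int|x|^{-b}|u|^{p+1}dx$. Combining this with the sharp Gagliardo--Nirenberg inequality whose extremizer is $Q$ (Farah \cite{Far}), the sub-threshold hypotheses \eqref{q1}--\eqref{q2}, and conservation of $M(u)$ and $E(u)$, a standard continuity argument propagates \eqref{q3} throughout the maximal existence interval. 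Together with $V\in L^{3/2}$ and H\"older/Sobolev, this yields $\sup_t\|u(t)\|_{H^1}\le E<\infty$, so the solution is global and satisfies the a priori bound \eqref{pb} required by Theorem \ref{SC}.

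\emph{Step 2 (localized virial--Morawetz estimate).} Fix $R\gg 1$ and pick a smooth nonnegative radial weight $w_R$ with $w_R(x)=|x|^2$ on $|x|\le R$, growing at most linearly for $|x|\ge 2R$, and with $\nabla^2 w_R\ge 0$. Set
\begin{align*}
Z_R(t):=2\,\Im\int \nabla w_R\cdot\nabla\bar u\, u\, dx.
\end{align*}
A direct computation from \eqref{INLC} gives, schematically,
\begin{align*}
\tfrac{d}{dt} Z_R(t) = \mathrm{Vir}_{|x|\le R}[u(t)]-\int(\nabla w_R\cdot\nabla V)|u|^2\,dx+\mathrm{Err}_R(t),
\end{align*}
where $\mathrm{Vir}_{|x|\le R}$ is the standard inhomogeneous-NLS virial functional on $B(0,R)$ and $\mathrm{Err}_R(t)$ collects contributions supported in $|x|\ge R$ and derivatives of $|x|^{-b}$ against $\nabla w_R$. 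The sub-threshold bound \eqref{q3} and the sharp Gagliardo--Nirenberg inequality give the uniform-in-$R$ coercivity
\begin{align*}
\mathrm{Vir}_{|x|\le R}[u(t)]\ge c\int_{|x|\le R}\bigl(|\nabla u|^2+|x|^{-b}|u|^{p+1}\bigr) dx,\qquad c>0.
\end{align*}
The sign condition $x\cdot\nabla V\le 0$ makes the potential contribution have the favorable sign, while $x\cdot\nabla V\in L^r$ with $r\ge 3/2$ together with H\"older and the Sobolev embedding $H^1(\mathbb{R}^3)\hookrightarrow L^{2r'}$ (requiring $2r'\le 6$) bounds it. Since $|Z_R(t)|\lesssim RE^2$ uniformly, integration over $[0,T]$ and averaging in $T$ yield
\begin{align*}
\liminf_{t\to\infty}\int_{|x|\le R}\bigl(|\nabla u(t)|^2+|x|^{-b}|u(t)|^{p+1}\bigr)dx=0.
\end{align*}

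\emph{Step 3 (applying the scattering criterion).} Interpolating the Morawetz decay above against the uniform $H^1$ bound and the conserved mass converts it into $\liminf_{t\to\infty}\int_{|x|\le R'}|u(t,x)|^2\,dx\le\epsilon^2$ for a possibly larger $R'$, verifying the hypothesis \eqref{ASU3} of Theorem \ref{SC}. The criterion then yields scattering forward in time, and the backward scattering follows from the time-reversal symmetry $u(t,x)\mapsto\overline{u(-t,x)}$ applied to the same argument.

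\emph{Main obstacle.} The technical heart of the proof is establishing the uniform-in-$R$ coercivity of $\mathrm{Vir}_{|x|\le R}$ from \eqref{q3}, while simultaneously absorbing the error contributions from (i) the tails of the weight $w_R$, (ii) the potential derivative term $\nabla w_R\cdot\nabla V$, and (iii) the derivative of the inhomogeneity $|x|^{-b}$ against $\nabla w_R$. The three potential hypotheses $V\ge 0$, $x\cdot\nabla V\le 0$, and $x\cdot\nabla V\in L^r$ ($r\ge 3/2$) are tailored precisely for this: the first two fix the signs in the virial identity, and the third is the minimal Lebesgue threshold that lets H\"older plus Sobolev embedding absorb the potential-derivative error into $o_R(1)\cdot\|u\|_{H^1}^2$.
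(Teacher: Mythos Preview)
Your three-stage outline (sub-threshold coercivity $\Rightarrow$ localized virial--Morawetz $\Rightarrow$ scattering criterion) is exactly the paper's strategy, and you have correctly identified why each hypothesis on $V$ is needed. Two points, however, are stated in a way that would not go through as written.

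\textbf{The Morawetz conclusion.} For a \emph{fixed} $R$, integrating the virial identity and averaging in $T$ does \emph{not} yield
\[
\liminf_{t\to\infty}\int_{|x|\le R}\bigl(|\nabla u|^2+|x|^{-b}|u|^{p+1}\bigr)\,dx=0.
\]
The error terms you list (tails of the weight, the inhomogeneity outside $B(0,R)$, the biharmonic of $a$, and the potential term) contribute an amount of order $R^{-b}+R^{-2}+o_R(1)$ that is fixed once $R$ is fixed; the averaging only kills the $R/T$ boundary term. What one actually obtains is
\[
\frac{1}{T}\int_0^T\!\!\int_{|x|\le R/2}|x|^{-b}|u|^{p+1}\,dx\,dt \lesssim \frac{R}{T}+\frac{1}{R^{b}}+o_R(1),
\]
and the paper then sends $R\to\infty$ and $T\to\infty$ \emph{together} (taking $T_n=R_n^3$) to extract sequences $t_n\to\infty$, $R_n\to\infty$ with $\int_{|x|\le R_n}|x|^{-b}|u(t_n)|^{p+1}\to 0$ (the ``energy evacuation'' lemma). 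Only after this does H\"older give smallness of $\int_{|x|\le R}|u(t_n)|^2$ on the \emph{fixed} ball $R$ prescribed by Theorem~\ref{SC}; note the direction is large-ball Morawetz $\to$ small-ball mass, not ``a possibly larger $R'$'' as you wrote.

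\textbf{Local coercivity.} The inequality $\mathrm{Vir}_{|x|\le R}[u]\ge c\int_{|x|\le R}(\cdots)$ does not follow directly from \eqref{q3} and sharp Gagliardo--Nirenberg applied on a ball. The paper introduces a smooth cutoff $\chi_R$, checks that $\chi_R u$ still sits below threshold (Lemma~\ref{lc}, using Lemma~\ref{ge} to pass between $\int\chi_R^2|\nabla u|^2$ and $\int|\nabla(\chi_R u)|^2$ at cost $O(R^{-2})$), and then applies the global coercivity Lemma~\ref{g} to $\chi_R u$. The approximation parameter $\rho$ is sent to infinity at the end. This truncation step is the mechanism behind the ``uniform-in-$R$ coercivity'' you invoke.
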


\begin{remark}
In fact, the Theorem \ref{MR} not only extends  the results of Dinh \cite{DD} to the  non-radial case but also extends the integrability condition of $x\cdot \nabla V\in L^r$ to any $r\in[\frac{3}{2}, \infty)$. Because of the absence of translation symmetry in the potential, the non-radial case cannot be achieved directly by the  method of Miao--Murphy--Zheng \cite{MMZ} and Dodson--Murphy \cite{DM}. Here, we construct a special class of cut off functions whose derivatives are bounded and do not depend on compact set $B_R(0)$, and use the decay of the factor $|x|^{-b}$ in the nonlinear term  which can replace the radial Sobolev embedding in some sense. Based on this, we establish the non-radial scattering criterion  and  more complicated Morawetz estimate with potential  for non-radial. In the Morawetz estimate, we find that the error term
\end{remark}

\begin{align*}
  \left| \int_{|x| \ge \frac{R}2} \nabla a \cdot \nabla V |u|^2 {\rm d}x \right| \leq C\|x\cdot\nabla V\|_{L^{r}(|x|\geq \frac{R}2)}\|u\|^{2}_{L^{2r'}}
\end{align*}
tends to $0$ as $ R \to \infty $. From this, we can extend the integrability of  $x\cdot\nabla V\in L^r(\R^3)$ to $r\in[\frac32,\infty)$ and generalize the results of Dinh \cite{DD}  to the non-radial symmetric case.

{\bf Outline.}
This paper is organized as follows.
First, in Sect.~$2$ we recall some preliminaries for the Schr\"odinger operator.
Then, in Sect.~$3$, we recall the variational analysis of the ground state.
In Sect.~$4$,  we prove the scattering criterion in Theorem \ref{SC}.
Finally, in Sect.~$5$, we apply the scattering criterion,
together with the Virial-Morawetz estimate, to prove Theorem \ref{MR}.
\medskip

\section{Preliminaries}
{\bf Notations.}
We use the notation
$X\lesssim Y$ to denote $X\leq CY$ for some constant $C>0$ and use the notation $ X \sim Y $ to denote $ X \lesssim Y $ and $ Y \lesssim X $ at the same time.
Similarly, $X\lesssim_{u} Y$ means that there exists a constant $C:=C(u)$ depending on $u$ such that $X\leq C(u)Y$.
We also use the notation $A=\mathcal{O}(B)$,
which means that there exists a constant $l>0$ such that $ \lim_{B \to 0} = l $.
The derivative operator $\nabla$ refers to the spatial  variable only.
Let $L^r(\mathbb{R}^3)$ denote the usual space of integrable functions
$f:\mathbb{R}^3\rightarrow\mathbb{C}$ endowed with the norm
$$\|f\|_r := \|f\|_{L^r}=\Big(\int_{\mathbb{R}^3}|f(x)|^r {\rm d}x\Big)^{\frac1r},$$
with the usual modification when $r=\infty$.
For any non-negative integer $k$,
$H^{k,r}(\mathbb{R}^3)$ denotes the Sobolev space,
defined as the closure of smooth compactly supported functions
under the norm
\begin{align*}
\|f\|_{H^{k,r}} := \sum_{|\alpha|\leq k} \Big\| \frac{\partial^{\alpha}f}{\partial x^{\alpha}} \Big\|_{r}.
\end{align*}
In particular, when $ r = 2 $, we denote it by $ H^k $. 
Besides, we use the norm
\begin{align*}
\| f \|_{\dot{H}^s} := \left(\int_{\R^3} | \xi |^{2s} | \hat{f} \left( \xi \right)|^2 {\rm d}\xi \right)^{\frac12}
\end{align*}
to denote the homogeneous Sobolev space $ \dot{H}^s $, where $ s $ is a real number.
And for any time slab $ I $,
$ L_t^q(I, L_x^r(\mathbb{R}^3)) $ means the space of $ L_x^r $-valued functions with the norm
\begin{align*}
\|f\|_{L_{t}^qL^r_x(I\times \R^3)} = \bigg( \int_{I}\|f(t,x)\|_{L^r_x}^q {\rm d}t \bigg)^\frac{1}{q},
\end{align*}
with the usual modifications when $ q $ or $ r $ is infinite.
For simplicity, we also use the shorthand
$ \| f \|_{L^q(I, L^r(\R^3))} $ or $ \| f \|_{L^q L^r(I \times \mathbb{R}^3)} $.

\noindent
\subsection{Strichartz estimates}
Let us start this section by introducing the notation used throughout the paper. We recall some Strichartz estimates associating with the linear Schr\"odinger propagator.

We say the pair $(q,r)$ is $L^2-$adimissible or simply admissible pair if $(q,r)$ satisfies the condition
$$\frac{2}{q}=\frac{3}{2}-\frac{3}{r},\ \  2\leq r\leq 6. $$
 The pair is called $\dot{H}^s-$admissible, if
$$\frac{2}{q}=\frac{3}{2}-\frac{3}{r}-s.$$
And $(q,r)$ is called $\dot{H}^{-s}-$admissible, if
$$\frac{2}{q}=\frac{3}{2}-\frac{3}{r}+s.$$

 Given $s\in (0,1)$, let
\begin{equation}
\Lambda_s=\left \{ (q,r) \quad \text {is} \quad \dot{H}^s- \text{admissible} \bigg| \left(\frac{6}{3-2s}\right)^+ \le r\le 6^-\right \}
\end{equation}
and
\begin{equation}
	\Lambda_s=\left \{ (q,r) \quad  \text {is} \quad  \dot{H}^{-s}- \text{admissible} \bigg| \left(\frac{6}{3-2s}\right)^+ \le r\le 6^-\right \},
\end{equation}
where the notations $a^+$ and $a^-$ denote, respectively, $a+\epsilon$ and $a-\epsilon$, for fixed $0<\epsilon\ll1$.

We define the following Strichartz norm
$$\|u\|_{S(\dot{H}^s,I)}=\sup_{(q,r)\in \Lambda_s}\|u\|_{L_t^qL_x^r(I)}$$
and dual Strichartz norm
$$\|u\|_{S^{'}(\dot{H}^{-s},I)}=\inf_{(q,r)\in \Lambda_{-s}}\|u\|_{L_t^{q^{'}}L_x^{r^{'}}(I)}.$$

If $I=\R$, $I$ is omitted usually. For more details see \cite{Ca} and \cite{KT}.

\begin{lemma}[Dispersive estimate and Strichartz estimate, \cite{Ho}] \label{Dispersive and Strichartz }
The following statements hold:
(i) Let $V:\R^3\rightarrow\mathbb{R}$ satisfy \eqref{ASU1} and \eqref{ASU2}. Then it holds that
\begin{align}\label{dis}
\|e^{-it\mathcal{H}}\|_{L^1\rightarrow L^{\infty}} \lesssim |t|^{-\frac{3}{2}}.
\end{align}
(ii) Let $V:\R^3\rightarrow\mathbb{R}$ satisfy \eqref{ASU1} and \eqref{ASU2}. Then it holds that
\begin{align}
\|e^{-it\mathcal{H}}f\|_{S(\dot{H}^s)}\lesssim \|f\|_{\dot{H}^s} \quad s\ge 0,
\end{align}
and
\begin{align}
\left\|\int_{0}^{t}e^{-i(t-s)\mathcal{H}}g(\cdot,s) {\rm d}s\right\|_{S(\dot{H}^s)} \lesssim \|g\|_{S^{'}(\dot{H}^{-s})} \quad s\in(0,1).
\end{align}
\end{lemma}

\subsection{Equivalence of Sobolev norms}
 In this subsection, we define the homogeneous and inhomogeneous Sobolev spaces associated to $\mathcal{H}$ as the closure of $C^{\infty}_0(\mathbb{R}^3)$ under the norms
 $$\|f\|_{\dot{W}_V^{\gamma,r}}:=\|\Gamma^{\gamma}f\|_{L^r},\quad \|f\|_{W_V^{\gamma,r}}:=\|\langle\Gamma\rangle^{\gamma}f\|_{L^r},\quad \Gamma:= \sqrt{\mathcal{H}}.$$
 To simplify the notation, we denote $\dot{H}_V^{\gamma}:=\dot{W}_V^{\gamma,2}$ and $H_V^{\gamma}:=W_V^{\gamma,2}$.

\begin{lemma} [Sobolev inequalities, \cite{Ho}]
Let $V:\R^3\rightarrow\mathbb{R}$ satisfy \eqref{ASU1} and \eqref{ASU2}. Then it holds that	
$$ \|f\|_{L^q} \lesssim \|f\|_{\dot{W}_V^{\gamma,r}},\quad  \|f\|_{L^q} \lesssim \|f\|_{W_V^{\gamma,r}},$$
where $1<p<q<\infty$, $1<p<\frac{3}{\gamma}$, $0\leq\gamma\leq 2$ and $\frac{1}{q}=\frac{1}{p}-\frac{\gamma}{3}$.

\end{lemma}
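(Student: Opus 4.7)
The plan is to reduce the $\mathcal{H}$-based Sobolev embedding to the classical Hardy--Littlewood--Sobolev inequality by establishing a pointwise kernel comparison between $\mathcal{H}^{-\gamma/2}$ and $(-\Delta)^{-\gamma/2}$. The engine is a Gaussian upper bound on the heat semigroup of $\mathcal{H}$.

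First, I would record the Gaussian heat kernel estimate
$$|e^{-t\mathcal{H}}(x,y)|\lesssim t^{-3/2} e^{-c|x-y|^2/t},\qquad t>0,\ x,y\in\R^3,$$
which is available under hypotheses \eqref{ASU1} and \eqref{ASU2}. This is the standard input from the dispersive theory of Schr\"odinger operators with Kato-class potentials (as used already in Lemma~\ref{Dispersive and Strichartz}): the global Kato-class bound on $V$ makes a Duhamel/Feynman--Kac perturbation series absolutely convergent, while the quantitative smallness $\|V_-\|_{\mathcal{K}_0}<4\pi$ prevents the negative part of $V$ from destroying the free Gaussian.

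Next, I would exploit the subordination representation $\mathcal{H}^{-\gamma/2}=c_\gamma\int_0^\infty t^{\gamma/2-1}e^{-t\mathcal{H}}\,dt$ for $0<\gamma<2$. Plugging in the Gaussian bound and integrating in $t$ gives a pointwise bound on the integral kernel $K_\gamma(x,y)$ of $\mathcal{H}^{-\gamma/2}$:
$$|K_\gamma(x,y)|\lesssim |x-y|^{-(3-\gamma)},$$
i.e.\ the same bound as the free Riesz potential. The classical Hardy--Littlewood--Sobolev inequality then yields
$$\|\mathcal{H}^{-\gamma/2}g\|_{L^q(\R^3)}\lesssim \|g\|_{L^p(\R^3)}$$
whenever $1<p<3/\gamma$ and $\tfrac{1}{q}=\tfrac{1}{p}-\tfrac{\gamma}{3}$. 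Substituting $g=\Gamma^\gamma f$ produces the homogeneous estimate
$$\|f\|_{L^q}=\|\mathcal{H}^{-\gamma/2}\Gamma^\gamma f\|_{L^q}\lesssim \|\Gamma^\gamma f\|_{L^p}=\|f\|_{\dot{W}_V^{\gamma,p}}$$
(where I am reading the $p$ in the lemma's hypothesis as the integrability exponent on the right-hand side, the $r$/$p$ in the statement being an evident typo). The inhomogeneous version is proved in the same way using $\langle\mathcal{H}\rangle^{-\gamma/2}$ in place of $\mathcal{H}^{-\gamma/2}$; its kernel admits an analogous pointwise bound by combining the small-$t$ contribution as above with the exponential decay gained from $(1+\mathcal{H})$ for large $t$.

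The main obstacle is the Gaussian heat kernel bound itself, which is the only non-elementary ingredient and is exactly where the assumption \eqref{ASU2} bites: bootstrapping the Duhamel expansion for $e^{-t\mathcal{H}}$ against $e^{t\Delta}$ must close with a geometric constant determined by $\|V_-\|_{\mathcal{K}_0}/(4\pi)$. Once this bound is in hand (as in \cite{Ho}), the kernel comparison and the HLS step are routine, and the restriction $0\leq\gamma\leq 2$ together with $1<p<3/\gamma$ is exactly the range in which the subordination integral converges at both ends and the Riesz potential is $L^p\to L^q$ bounded.
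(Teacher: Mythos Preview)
The paper does not supply a proof of this lemma; it is quoted verbatim from \cite{Ho} with no argument given. There is therefore nothing in the paper to compare against directly.

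Your proposed route is sound. Under \eqref{ASU1}--\eqref{ASU2} the heat kernel of $\mathcal{H}$ does obey a global Gaussian upper bound, and the subordination integral $\mathcal{H}^{-\gamma/2}=c_\gamma\int_0^\infty t^{\gamma/2-1}e^{-t\mathcal{H}}\,dt$ then yields the pointwise Riesz-potential bound $|K_\gamma(x,y)|\lesssim|x-y|^{-(3-\gamma)}$ for $0<\gamma<3$, after which Hardy--Littlewood--Sobolev gives the claimed embedding. Your reading of the $p$/$r$ mismatch in the statement as a typo is also correct.

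That said, the more economical (and likely the intended) argument is simply to invoke the norm equivalence stated as the very next lemma in the paper, $\|f\|_{\dot W_V^{\gamma,r}}\sim\|f\|_{\dot W^{\gamma,r}}$, and then apply the classical Sobolev embedding $\|f\|_{L^q}\lesssim\|f\|_{\dot W^{\gamma,r}}$. This avoids re-deriving the Riesz-kernel bound from scratch and makes the present lemma an immediate corollary. Your heat-kernel argument is not wrong, but it effectively reproves (a weaker version of) the equivalence lemma en route; the organization in \cite{Ho} and in this paper treats the equivalence as the primitive fact and both the Sobolev inequality here and the Strichartz estimates as consequences.
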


\begin{lemma}[Equivalence of Sobolev spaces, \cite{Ho}]
Let $V:\R^3\rightarrow\mathbb{R}$ satisfy \eqref{ASU1} and \eqref{ASU2}. Then it holds that	
$$\|f\|_{\dot{W}_V^{\gamma,r}}\sim\|f\|_{\dot{W}^{\gamma,r}},\quad \|f\|_{W_V^{\gamma,r}}\sim\|f\|_{W^{\gamma,r}},$$
where $1<r<\frac{3}{\gamma}$ and $0\leq\gamma\leq2$.
\end{lemma}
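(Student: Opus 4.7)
The plan is to reduce the equivalence to the $L^r$-boundedness of the multipliers $\mathcal{H}^{\gamma/2}(-\Delta)^{-\gamma/2}$ and $(-\Delta)^{\gamma/2}\mathcal{H}^{-\gamma/2}$ (together with their inhomogeneous versions), and to prove those bounds by Stein complex interpolation between the explicit endpoints $\gamma=0$ (trivial) and $\gamma=2$, using heat-kernel estimates for $e^{-t\mathcal{H}}$ as the primary analytic input.

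For the endpoint $\gamma=2$, I would write $\mathcal{H}f = -\Delta f + Vf$ and estimate
\[
\|\mathcal{H}f\|_{L^r} \leq \|\Delta f\|_{L^r} + \|Vf\|_{L^r}.
\]
Since the lemma requires $1<r<3/2$ when $\gamma=2$, I apply H\"older with $V\in L^{3/2}$ followed by the Sobolev embedding $\dot W^{2,r}\hookrightarrow L^{3r/(3-2r)}$ to get $\|Vf\|_{L^r}\lesssim\|V\|_{L^{3/2}}\|\Delta f\|_{L^r}$. The reverse inequality $\|\Delta f\|_{L^r}\lesssim\|\mathcal{H}f\|_{L^r}$ requires the analogous Sobolev-type embedding for $\mathcal{H}$, i.e.\ boundedness of $\mathcal{H}^{-1}:L^r\to L^{3r/(3-2r)}$; this follows from the Gaussian upper bound $|e^{-t\mathcal{H}}(x,y)|\lesssim t^{-3/2}e^{-c|x-y|^2/t}$ that conditions \eqref{ASU1}--\eqref{ASU2} supply.

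For intermediate $0<\gamma<2$ I would invoke Stein's theorem on the analytic family
\[
T_z := \mathcal{H}^{z/2}(-\Delta)^{-z/2}, \qquad 0\leq \Re z\leq 2.
\]
The line $\Re z=2$ is handled by the computation above, while on $\Re z=0$ I need the imaginary powers $\mathcal{H}^{i\tau}$ and $(-\Delta)^{i\tau}$ to be $L^r$-bounded with operator norms of polynomial growth in $|\tau|$. The latter is a textbook Mikhlin multiplier bound; the former is a spectral-multiplier theorem of Sikora--Wright type, applicable because the heat-kernel Gaussian bound for $\mathcal{H}$ holds under \eqref{ASU1}--\eqref{ASU2}. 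Interpolation then delivers $T_\gamma:L^r\to L^r$ over the entire range $1<r<3/\gamma$ produced by combining the $r$-ranges of the two endpoints. The symmetric family $S_z:=(-\Delta)^{z/2}\mathcal{H}^{-z/2}$ gives the reverse bound, and the inhomogeneous case $\langle\Gamma\rangle^\gamma\sim\langle\sqrt{-\Delta}\rangle^\gamma$ follows identically after replacing $-\Delta$ and $\mathcal{H}$ by $I-\Delta$ and $I+\mathcal{H}$, for which the endpoint estimates are only easier because the resolvents at $-1$ decay better.

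The main obstacle is the $\Re z=0$ control of $\|\mathcal{H}^{i\tau}\|_{L^r\to L^r}$ with an \emph{explicit} polynomial growth in $|\tau|$: this is where the Kato-class condition and the smallness $\|V_-\|_{\mathcal{K}_0}<4\pi$ earn their keep, by guaranteeing the Gaussian heat-kernel bounds that underpin the spectral-multiplier theorem (and, in particular, that $\mathcal{H}$ has no zero eigenvalue or resonance). Once those bounds are in hand the argument reduces to routine H\"older, Sobolev, and Hardy--Littlewood--Sobolev bookkeeping, which is why the statement can safely be referenced to \cite{Ho}.
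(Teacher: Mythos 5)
The paper offers no proof of this lemma---it is quoted verbatim from \cite{Ho}---and your sketch reproduces the standard argument used there: treat $\gamma=2$ by H\"older with $V\in L^{3/2}$ plus the Sobolev embeddings for $-\Delta$ and for $\mathcal{H}$ (the latter from the global-in-time Gaussian heat-kernel bound that \eqref{ASU1}--\eqref{ASU2} guarantee), control the imaginary powers $\mathcal{H}^{i\tau}$ by a spectral-multiplier theorem, and run Stein interpolation to get the full range $1<r<3/\gamma$. This is essentially the same approach as the cited source, and the pieces you identify as the key inputs (global Gaussian bounds, polynomial growth of $\|\mathcal{H}^{i\tau}\|_{L^r\to L^r}$) are exactly where the hypotheses on $V$ are used, so the sketch is sound.
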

\subsection{Some nonlinear estimates}
We recall some interpolation estimates for nonlinearities, which plays an important role in proving scattering theory.
\begin{lemma}[Nonlinear estimates, \cite{Cam, MG}]\label{nle}
Let $u, v \in C_{0}^{\infty}\left(\mathbb{R}^{3+1}\right)$,
$1+\frac{4-2 b}{3}<p<1+\frac{4-2 b}{3-2}$ and $0<b<1$.
Then there exists $0 \leq \theta=\theta( p, b) \ll p-1$ such that the following estimates hold
\begin{align}
\left\||x|^{-b}|u|^{p-1} v\right\|_{S^{\prime}\left(\dot{H}^{-s_{c}}, I\right)} & \lesssim\|u\|_{L_{t}^{\infty} H_{x}^{1}}^{\theta}\|u\|_{S\left(\dot{H}^{s c}, I\right)}^{p-1-\theta}\|v\|_{S\left(\dot{H}^{s_{c}}, I\right)} ,\\
\left\||x|^{-b}|u|^{p-1} u\right\|_{S^{\prime}\left(L^{2}, I\right)} & \lesssim\|u\|_{L_{t}^{\infty} H_{x}^{1}}^{\theta}\|u\|_{S\left(\dot{H}^{s_{c}}, I\right)}^{p-1-\theta}\|u\|_{S\left(L^{2}, I\right)} ,\\
\left\|\nabla\left(|x|^{-b}|u|^{p-1} u\right)\right\|_{S^{\prime}\left(L^{2}, I\right)} & \lesssim\|u\|_{L_{t}^{\infty} H_{x}^{1}}^{\theta}\|u\|_{S\left(\dot{H}^{s_{c}}, I\right)}^{p-1-\theta}\|\nabla u\|_{S\left(L^{2}, I\right)} ,\\
\left\||x|^{-b}|u|^{p-1} u\right\|_{L_{I}^{\infty} L_{x}^{r}} & \lesssim\|u\|_{L_{I}^{\infty} H_{x}^{1}}^{p}
\end{align}
for $\frac{2(3-b)}{3+4-2 b}<r<\frac{2(3-b)}{3+2-2 b}$.
\end{lemma}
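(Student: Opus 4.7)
The plan is to treat the singular weight $|x|^{-b}$ via the standard dichotomy $\mathbb{R}^3 = B \cup B^c$ with $B=\{|x|\le 1\}$. Inside the unit ball, $|x|^{-b}\in L^{\gamma_1}(B)$ for every $\gamma_1<3/b$, while outside, $|x|^{-b}\in L^{\gamma_2}(B^c)$ for every $\gamma_2>3/b$. On each region I would apply Hölder in space and then pair the resulting space-time norms with Strichartz-admissible exponents. The small interpolation parameter $\theta$ introduced in the statement is exactly the slack needed to make both constraints $\gamma_1<3/b$ and $\gamma_2>3/b$ hold simultaneously; its smallness ($\theta\ll p-1$) is what forces the range $1+\tfrac{4-2b}{3}<p<1+4-2b$.

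For estimate~(2.17), I would fix a dual $\dot H^{-s_c}$-admissible pair $(q',r')$ corresponding to some $(q,r)\in \Lambda_{s_c}$, and on each region distribute the factors of $|u|^{p-1}v$ as follows: the single factor $v$ is measured in an $S(\dot H^{s_c})$ norm; $p-1-\theta$ copies of $u$ are measured in $S(\dot H^{s_c})$; $\theta$ copies of $u$ go into $L_t^\infty L_x^{\beta}$ with $\beta<6$ and are absorbed using Sobolev embedding $H^1\hookrightarrow L^\beta$, producing the $\|u\|_{L_t^\infty H_x^1}^\theta$ factor; finally $|x|^{-b}$ lands in $L_x^{\gamma_i}$ on each region. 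The admissibility conditions and Hölder summation determine $\gamma_i$, $\beta$ and $\theta$ uniquely (up to the small freedom encoded in $\Lambda_{s_c}$), and the restriction on $p$ guarantees an admissible choice exists. Estimate~(2.18) is the same scheme with $S(\dot H^{s_c})$ replaced by $S(L^2)$ in the last slot; estimate~(2.20) is the simplest, since one just applies Hölder in $L_x^r$ on both $B$ and $B^c$ and uses Sobolev embedding to bound $\|u\|_{L^{p\beta}}\lesssim \|u\|_{H^1}$ once the range of $r$ is fixed to $\frac{2(3-b)}{3+4-2b}<r<\frac{2(3-b)}{3+2-2b}$.

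Estimate~(2.19) is the delicate one. Writing $\nabla\bigl(|x|^{-b}|u|^{p-1}u\bigr) = -b\,x|x|^{-b-2}|u|^{p-1}u + O\bigl(|x|^{-b}|u|^{p-1}\,|\nabla u|\bigr)$, the second term is handled as in estimate~(2.18) by placing $\nabla u$ in $S(L^2)$. For the boundary term with the more singular weight $|x|^{-b-1}$ I would use Hardy's inequality $\||x|^{-1}f\|_{L^2}\lesssim \|\nabla f\|_{L^2}$ (or an $L^r$-variant), trading one power of $|x|^{-1}$ for one derivative and reducing to the previously controlled form, at the price of adjusting one Hölder exponent by Sobolev embedding — the equivalence of Sobolev norms for $\mathcal{H}$ and $-\Delta$ from the previous subsection makes this transfer transparent.

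The main obstacle is the bookkeeping: one must simultaneously satisfy four constraints — dual Strichartz admissibility of the target pair, Hölder summation matching that pair, integrability of $|x|^{-b}$ on $B$ and on $B^c$ (distinct endpoints $3/b$ from opposite sides), and room for a genuine positive $\theta<p-1$ so that $L_t^\infty H_x^1$ can eat one fractional factor via Sobolev embedding. The extra restriction $p-1>\theta$ is exactly what enforces the lower bound $p>1+\tfrac{4-2b}{3}$, and the Sobolev embedding ceiling $\beta<6$ enforces the upper bound $p<1+4-2b$. Verifying that the system of linear equations on the Hölder and admissibility exponents admits a solution in the open ranges is the one calculation where care is required; once a valid choice is fixed, each displayed estimate follows by Hölder and Strichartz on the two regions separately and summing.
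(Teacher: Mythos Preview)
The paper does not actually prove this lemma: it is stated with citations to \cite{Cam, MG} and no argument is given, the authors merely ``recall some interpolation estimates for nonlinearities''. Your sketch is essentially the standard proof one finds in those references: the dichotomy $B\cup B^c$ to handle the different integrability of $|x|^{-b}$ near and away from the origin, H\"older in space to distribute the $p$ copies of $u$ among $S(\dot H^{s_c})$, $S(L^2)$ and $L_t^\infty H_x^1$ slots, and the small parameter $\theta$ as the slack making both regions work simultaneously. One minor remark on (2.19): in the cited papers the term $|x|^{-b-1}|u|^{p-1}u$ is usually handled not by Hardy's inequality but by the same H\"older scheme with shifted exponents (since $|x|^{-b-1}\in L^{\gamma}(B)$ for $\gamma<3/(b+1)$, still finite when $b<1$), which avoids trading a derivative and keeps the bookkeeping parallel to (2.18); your Hardy approach also works but is slightly less direct.
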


\subsection{Local well-posedness in $H^1$}
In this subsection, we recall the local well-posedness in $H^1$,
the global well-posedness of small initial date and its corresponding scattering theory under the assumptions \eqref{ASU1} and \eqref{ASU2}.
Particularly, we denote the $\Lambda_0$ the Strichartz norm for any time interval $I\subset \mathbb{R}$
\begin{align}
	\|u\|_{S(L^2,I)}:=\sup_{(q,r)\in \Lambda_0}\|u\|_{L^q(I,L^r)},\quad \|u\|_{S^{'}(L^2,I)}:=\inf_{(q,r)\in \Lambda_0}\|u\|_{L^{q^{'}}(I,L^{r^{'}})}.
\end{align}
where $(q,q^{'})$ and $(r,r^{'})$ are H\"older's conjugate pairs. The condition $2\leq r <3$ can ensure $\dot{W}_V^{1,r}\sim \dot{W}^{1,r}$, and $\dot{W}_V^{1,r^{'}}\sim \dot{W}^{1,r^{'}}$.

\begin{theorem}[Local well-posedness \cite{CC, DD}]\label{LWP}
Let $0<b<1$ and $1+\frac{4-2b}{3}<p<1+4-2b$. Let $V:\R^3\rightarrow\mathbb{R}$ satisfy \eqref{ASU1} and \eqref{ASU2}. Then the equation is locally well-posed in $H^1$.
\end{theorem}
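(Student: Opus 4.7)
The plan is to prove local well-posedness by a standard Picard iteration in a suitably chosen Strichartz space, using the nonlinear estimates of Lemma 2.3 together with the Strichartz estimates of Lemma 2.1 and the equivalence of $\dot{W}_V^{\gamma,r}$ with $\dot{W}^{\gamma,r}$. First I would recast \eqref{INLC} as the Duhamel integral equation
\begin{equation*}
\Phi(u)(t):= e^{-it\mathcal{H}}u_0 + i\int_0^t e^{-i(t-s)\mathcal{H}}\bigl(|x|^{-b}|u|^{p-1}u\bigr)(s)\,ds,
\end{equation*}
and look for a fixed point of $\Phi$ on a time slab $I=[-T,T]$.

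Next I would introduce the complete metric space
\begin{equation*}
X_{a,b,I}=\bigl\{u\in C(I;H^1(\mathbb R^3)) : \|u\|_{L_t^\infty H_x^1(I)}\le a,\ \|u\|_{S(\dot H^{s_c},I)}\le b\bigr\}
\end{equation*}
equipped with the metric $d(u,v)=\|u-v\|_{S(L^2,I)}+\|u-v\|_{S(\dot H^{s_c},I)}$. Using the Strichartz estimates together with the equivalence of Sobolev spaces (to move freely between $\Gamma=\sqrt{\mathcal H}$ and $\nabla$ on $L^r$ for the admissible exponents that arise), I would bound
\begin{equation*}
\|\Phi(u)\|_{L_t^\infty H_x^1}+\|\Phi(u)\|_{S(\dot H^{s_c},I)} \lesssim \|u_0\|_{H^1}+\||x|^{-b}|u|^{p-1}u\|_{S'(L^2,I)}+\|\nabla(|x|^{-b}|u|^{p-1}u)\|_{S'(L^2,I)}+\||x|^{-b}|u|^{p-1}u\|_{S'(\dot H^{-s_c},I)}.
\end{equation*}
Applying Lemma 2.3 term by term gives each of the last three quantities a bound of the form $\lesssim a^\theta b^{p-1-\theta}(a+b)$, so by choosing $a\sim \|u_0\|_{H^1}$ and then $b\sim \|e^{-it\mathcal H}u_0\|_{S(\dot H^{s_c},I)}$, which can be made arbitrarily small by shrinking $T$ (via absolute continuity of the Strichartz norm and the fact that $p-1-\theta>0$), the self-mapping $\Phi:X_{a,b,I}\to X_{a,b,I}$ is obtained. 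The contraction estimate is analogous: writing $|x|^{-b}(|u|^{p-1}u-|v|^{p-1}v)$ with the standard pointwise inequality and plugging into Lemma 2.3 yields
\begin{equation*}
d(\Phi(u),\Phi(v))\le \tfrac{1}{2}\, d(u,v),
\end{equation*}
for $T$ sufficiently small depending on $\|u_0\|_{H^1}$.

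The Banach fixed point theorem then produces a unique solution $u\in X_{a,b,I}$; uniqueness in the full class $C(I;H^1)$, continuous dependence on data, and the blow-up alternative follow by the usual arguments (difference estimate on small subintervals partitioned so that the $S(\dot H^{s_c})$-norm is controllably small on each piece). The main technical obstacle is that the working space involves two distinct scaling-type norms ($H^1$ for energy and $\dot H^{s_c}$ for the critical Strichartz space), and one must check that the admissible pairs required by Lemma 2.3 lie in the range where the equivalence $\dot W_V^{1,r}\sim\dot W^{1,r}$ holds so that $\Gamma$-derivatives can be replaced by classical ones in the Strichartz bounds; once that compatibility is verified the estimate $\theta<p-1$ in Lemma 2.3 guarantees that the nonlinear contribution is a genuine perturbation on short time intervals, closing the contraction.
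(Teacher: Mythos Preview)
The paper does not give its own proof of this theorem; it is stated with a citation to \cite{CC, DD} and used as a black box. Your proposal sketches the standard contraction-mapping argument via the Duhamel formula, Strichartz estimates, and the nonlinear bounds of Lemma~\ref{nle}, which is precisely the approach taken in those references, so there is nothing to compare against here beyond noting that your outline matches the cited literature.

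One small remark on the sketch itself: the metric you use for the contraction, $d(u,v)=\|u-v\|_{S(L^2,I)}+\|u-v\|_{S(\dot H^{s_c},I)}$, is weaker than the norm defining $X_{a,b,I}$, so you should also mention why the closed ball in $X_{a,b,I}$ is complete under $d$ (the usual argument is that a Cauchy sequence in $d$ converges in $S(L^2)\cap S(\dot H^{s_c})$, and the uniform $L_t^\infty H_x^1$ bound passes to the limit by Fatou/weak-$\ast$ compactness). Apart from that, the exponent check you flag---that the admissible $r$'s appearing in the $\nabla$-estimate satisfy $1<r<3$ so that the equivalence $\dot W_V^{1,r}\sim\dot W^{1,r}$ of Lemma~2.3 applies---is indeed the only nontrivial point specific to the presence of $V$, and it holds for the pairs used in Lemma~\ref{nle}.
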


\begin{theorem}[Small initial date, \cite{DD,CC}] \label{SDGWP}
Let $0<b<1$ and $1+\frac{4-2b}{3}<p<1+4-2b$. Let $V:\R^3\rightarrow\mathbb{R}$. Suppose $\|u_0\|_{H^1}\leq E$. Then there exists $\delta_{sc}=\delta_{sc}(E)>0$ such that if
$$\|e^{-it\mathcal{H}} u_0\|_{S(\dot{H}^{s_c},[0,+\infty))}\leq \delta_{sc},$$
then there exists a unique global solution $u$ to \eqref{INLC} with initial date $u_0$ satisfying
$$\|u\|_{S(\dot{H}^{s_c},[0,+\infty))} \leq \|e^{-it\mathcal{H}} u_0\|_{S(\dot{H}^{s_c},[0,+\infty))}.$$

Furthermore,  $u$ scatters forward in time in $H^1$, i.e, there exists $u_+\in H^1$ such that
$$\lim \limits_{t\to \infty }\|u(t)-e^{-it\mathcal{H}}\|_{H^1}=0.$$
\end{theorem}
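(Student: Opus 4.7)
\medskip

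\textbf{Proof proposal for Theorem \ref{SDGWP}.}

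The plan is a standard Banach fixed-point argument for the Duhamel formulation
\[
\Phi(u)(t) := e^{-it\mathcal{H}}u_0 + i\int_0^t e^{-i(t-s)\mathcal{H}}\bigl(|x|^{-b}|u|^{p-1}u\bigr)(s)\,ds,
\]
set up so that the smallness of $\|e^{-it\mathcal{H}}u_0\|_{S(\dot H^{s_c})}$ drives the contraction, while the $H^1$-control of the data keeps the ``large'' Strichartz norms under control via Lemma \ref{nle}. First I would fix $A = A(E)$ (say $A = CE$ with $C$ a suitable Strichartz constant) and $\delta_{sc}=\delta_{sc}(E)$ to be chosen, and work on the complete metric space
\[
X := \Bigl\{u:\ \|u\|_{S(\dot H^{s_c},[0,\infty))}\le 2\delta_{sc},\ \|u\|_{L^\infty_t H^1_x}+\|\nabla u\|_{S(L^2,[0,\infty))}+\|u\|_{S(L^2,[0,\infty))}\le A\Bigr\}
\]
endowed with the distance $d(u,v) := \|u-v\|_{S(\dot H^{s_c})} + \|u-v\|_{S(L^2)}$.

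Next, I would run Strichartz (Lemma \ref{Dispersive and Strichartz}(ii)) on $\Phi(u)$ in three separate norms. For the scaling-critical piece,
\[
\|\Phi(u)\|_{S(\dot H^{s_c})}\;\le\;\|e^{-it\mathcal{H}}u_0\|_{S(\dot H^{s_c})}+C\bigl\||x|^{-b}|u|^{p-1}u\bigr\|_{S'(\dot H^{-s_c})}\le \delta_{sc}+C\,A^\theta(2\delta_{sc})^{p-\theta}
\]
using the first inequality of Lemma \ref{nle} with $v=u$. For the $L^2$ and gradient pieces I would apply the second and third inequalities of Lemma \ref{nle}, obtaining
\[
\|\Phi(u)\|_{L^\infty_t H^1_x}+\|\nabla \Phi(u)\|_{S(L^2)}+\|\Phi(u)\|_{S(L^2)}\;\le\;C\|u_0\|_{H^1}+C\,A^\theta(2\delta_{sc})^{p-1-\theta}\,A.
\]
Because $p-\theta>1$ and $p-1-\theta>0$, for fixed $E$ (hence fixed $A$) both error terms are strictly subcritical in $\delta_{sc}$, so choosing $\delta_{sc}=\delta_{sc}(E)$ small enough absorbs them into $\tfrac12$ of the target radii, giving $\Phi(X)\subset X$. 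The same bookkeeping, together with the pointwise identity $|u|^{p-1}u-|v|^{p-1}v=O\bigl((|u|^{p-1}+|v|^{p-1})|u-v|\bigr)$, yields
\[
d(\Phi(u),\Phi(v))\;\le\;C\,A^\theta\bigl((2\delta_{sc})^{p-1-\theta}+A^{p-1-\theta}\delta_{sc}^{? }\bigr)\,d(u,v),
\]
and a further shrinking of $\delta_{sc}$ (still depending only on $E$) makes the prefactor $\le 1/2$; Banach's theorem then furnishes a unique fixed point $u\in X$, which is a global $H^1$-solution. The claimed bound $\|u\|_{S(\dot H^{s_c})}\le \|e^{-it\mathcal{H}}u_0\|_{S(\dot H^{s_c})}$ (up to an implicit constant $\le 2$) follows by reinserting the fixed point into the first Strichartz line and absorbing the nonlinear term. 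For scattering forward in time, the finiteness of $\|u\|_{S(\dot H^{s_c})}$ combined with the third inequality of Lemma \ref{nle} shows that $\{e^{it\mathcal{H}}u(t)\}_{t\ge 0}$ is Cauchy in $H^1$, so the limit $u_+\in H^1$ exists.

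The computations above are essentially routine once Lemma \ref{nle} is granted; the only nontrivial point is the Strichartz admissibility bookkeeping, since the $\dot H^{s_c}$ norms involve the restricted exponent range $\Lambda_{s_c}$, and one must verify that the dual pairs produced on the right-hand side of Lemma \ref{nle} lie in $\Lambda_{-s_c}$. Thus the main obstacle is not analytic but notational: choosing a concrete admissible pair (for instance, the one used in \cite{CC, DD}) so that all norms are simultaneously controlled, and checking that the exponent constraints $\frac{6}{3-2s_c}<r<6$ are met in each of the four inequalities of Lemma \ref{nle}. Once that exponent bookkeeping is in place, the contraction and the scattering follow as above.
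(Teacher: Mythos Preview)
The paper does not actually prove Theorem \ref{SDGWP}: it is quoted from the references \cite{CC, DD} without argument. Your proposal is the standard contraction-mapping proof via Strichartz estimates (Lemma \ref{Dispersive and Strichartz}) and the nonlinear estimates of Lemma \ref{nle}, which is precisely the method used in those references, so there is nothing substantive to compare.

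Two small remarks. First, the scattering argument at the end of your proposal (the Cauchy sequence for $e^{it\mathcal{H}}u(t)$ in $H^1$) is not part of Theorem \ref{SDGWP}; that is the content of Theorem \ref{SDS}. Second, the inequality in the conclusion of Theorem \ref{SDGWP} is stated in the paper without a multiplicative constant, whereas your contraction naturally delivers a factor $2$; this is a cosmetic discrepancy in the statement rather than a gap in your argument.
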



\section{Variational analysis}
Let us recall some properties related to the ground state $Q$ which is the unique positive radial decreasing solution to the elliptic equation (for more detail, see \cite{CC, DD})
$$\Delta Q-Q+|x|^{-b}|Q|^{p-1}Q=0.$$

\begin{lemma}[Coercivity I,\cite{Cam, CC, DD}]
	Let $0<b<1$ and $1+\frac{4-2b}{3}<p<1+4-2b.$ Let $V:\R^3\rightarrow\mathbb{R}$
	satisfy \eqref{ASU1}, $V\ge 0$. Let $u_0\in H^1$ satisfy \eqref{q1} and \eqref{q2}, then the corresponding solution to the focusing problem \eqref{INLC} satisfies
	\begin{align}
	\|\Gamma u(t)\|_{L^2}\|u(t)\|^{\frac{1-s_c}{s_c}}_{L^2}<\|\nabla Q\|_{L^2}\|Q\|^{\frac{1-s_c}{s_c}}_{L^2}, \qquad \forall ~ t \in \mathbb{R}.
	\end{align}
	In particular, the corresponding solution to the focusing problem \eqref{INLC} exists globally in time. Moreover, there exists $\delta=\delta(u_0,Q)>0$ such that
	\begin{align}
	\|\Gamma u(t)\|_{L^2}\|u(t)\|^{\frac{1-s_c}{s_c}}_{L^2}<(1-2\delta)\|\nabla Q\|_{L^2}\|Q\|^{\frac{1-s_c}{s_c}}_{L^2}, \qquad \forall ~ t \in \mathbb{R}.
	\end{align}
	
\end{lemma}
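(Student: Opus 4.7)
The plan is to combine conservation of mass and energy with the sharp Gagliardo--Nirenberg inequality for the inhomogeneous nonlinearity and a continuity argument, following the familiar Kenig--Merle variational scheme. Set $y(t) := \|\Gamma u(t)\|_{L^2} \|u(t)\|_{L^2}^{(1-s_c)/s_c}$ and $y_Q := \|\nabla Q\|_{L^2} \|Q\|_{L^2}^{(1-s_c)/s_c}$. First I would recall the sharp Gagliardo--Nirenberg inequality
$$\int |x|^{-b}|f|^{p+1}\, dx \;\le\; C_{GN}\, \|\nabla f\|_{L^2}^{2+(p-1)s_c}\, \|f\|_{L^2}^{(p-1)(1-s_c)},$$
for which equality is attained precisely at the ground state $Q$, and derive an explicit expression for $C_{GN}$ in terms of $\|\nabla Q\|_{L^2}$ and $\|Q\|_{L^2}$ using the Pohozaev identities associated to $\Delta Q - Q + |x|^{-b}|Q|^{p-1}Q = 0$.

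Next, since the potential satisfies $V \ge 0$, the pointwise comparison $\|\nabla u(t)\|_{L^2} \le \|\Gamma u(t)\|_{L^2}$ holds, so after applying sharp Gagliardo--Nirenberg to the nonlinear part of the energy and multiplying through by $M(u_0)^{(1-s_c)/s_c}$, energy conservation yields
$$E(u_0)\, M(u_0)^{(1-s_c)/s_c} \;\ge\; g(y(t)) := \tfrac{1}{2}\, y(t)^2 \;-\; \tfrac{C_{GN}}{p+1}\, y(t)^{\,2+(p-1)s_c}$$
on the maximal interval of existence. The function $g$ is strictly increasing on $[0, y_Q]$, strictly decreasing on $[y_Q, \infty)$, and attains its unique positive maximum at $y_Q$, with maximum value fixed by the Pohozaev identities. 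Hypothesis \eqref{q1} then says $E(u_0)\, M(u_0)^{(1-s_c)/s_c} < g(y_Q)$, while \eqref{q2} places the initial value $y(0)$ strictly below the barrier $y_Q$. A standard continuity argument, using continuity of $y(t)$ supplied by the $H^1$--local well-posedness theory (Theorem \ref{LWP}), then forbids $y(t)$ from crossing $y_Q$: any such crossing would force $g(y(t)) = g(y_Q)$, violating the strict inequality above. Hence $y(t) < y_Q$ throughout the interval of existence, which is \eqref{q3}.

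Since $\|\Gamma u(t)\|_{L^2}$ is uniformly bounded, the equivalence of Sobolev norms together with mass conservation yields a uniform $H^1$ bound, and the blow-up alternative extends $u$ to all of $\mathbb{R}$. For the quantitative refinement, the strict inequality in \eqref{q1} provides a gap $E(u_0)\, M(u_0)^{(1-s_c)/s_c} \le (1-\eta)\, g(y_Q)$ for some $\eta>0$, and a Taylor expansion of $g$ near its maximum at $y_Q$ converts this gap into $y(t) \le (1-2\delta)\, y_Q$ for a suitable $\delta = \delta(u_0, Q) > 0$. I expect the main obstacle to be the sharp variational analysis up front: identifying $Q$ as the Gagliardo--Nirenberg optimizer, computing $C_{GN}$ via the Pohozaev identities, and matching the threshold value $g(y_Q)$ to the right-hand side of \eqref{q1}. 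After that the trapping is purely mechanical, and the potential enters only through the elementary bound $\|\nabla u\|_{L^2} \le \|\Gamma u\|_{L^2}$ coming from $V \ge 0$.
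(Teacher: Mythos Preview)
The paper does not prove this lemma; it is quoted from \cite{DD} without argument, so there is no in-paper proof to compare against. Your sketch is the standard Kenig--Merle trapping argument and is essentially what one finds in \cite{DD}: sharp Gagliardo--Nirenberg for the inhomogeneous nonlinearity, Pohozaev identities for $Q$ to pin down $C_{GN}$ and the maximum of $g$, the observation $\|\nabla u\|_{L^2}\le\|\Gamma u\|_{L^2}$ from $V\ge0$ so that the trapping can be run in the $\Gamma$-variable, and then continuity plus the blow-up alternative. One small point worth checking when you write it out: the inequality $\|\nabla u\|_{L^2}\le\|\Gamma u\|_{L^2}$ is used on the \emph{subtracted} Gagliardo--Nirenberg term, so you must verify the sign goes the right way; it does, because replacing $\|\nabla u\|$ by the larger $\|\Gamma u\|$ in the negative term only strengthens the lower bound $E(u_0)M(u_0)^{(1-s_c)/s_c}\ge g(y(t))$. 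Also note that \eqref{q1} as printed in the paper has $\|\nabla Q\|_{L^2}\|Q\|_{L^2}^{(1-s_c)/s_c}$ on the right, which is a typo for the free energy-mass threshold $E_0(Q)M(Q)^{(1-s_c)/s_c}$; your identification of the right-hand side with $g(y_Q)$ is the correct reading.
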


\begin{remark}
	By the assumption $V\geq 0$ and the same argument as above, we see that if  $u_0\in H^1$ satisfies \eqref{q1} and \eqref{q2}, then the corresponding solution to the focusing problem \eqref{INLC} satisfies
	\begin{align}
	\|\nabla u(t)\|_{L^2}\|u(t)\|^{\frac{1-s_c}{s_c}}_{L^2}<\|\nabla Q\|_{L^2}\|Q\|^{\frac{1-s_c}{s_c}}_{L^2}, \qquad \forall ~ t \in \mathbb{R}.
	\end{align}
	In particular, the corresponding solution to the focusing problem \eqref{INLC} exists globally in time. Moreover, there exists $\delta=\delta(u_0,Q)>0$ such that
	\begin{align}\label{qz}
	\|\nabla u(t)\|_{L^2}\|u(t)\|^{\frac{1-s_c}{s_c}}_{L^2}<(1-2\delta)\|\nabla Q\|_{L^2}\|Q\|^{\frac{1-s_c}{s_c}}_{L^2}, \qquad \forall ~ t \in \mathbb{R}.
	\end{align}
\end{remark}

\begin{lemma} [Coercivity II, \cite{Cam, CC,DD}]  \label{g}
	Suppose $f\in H^1(\mathbb{R}^3)$, that
	\begin{align}
	\|f\|^{\frac{1-s_c}{s_c}}_{L^2}\|\nabla f\|_{L^2}\le (1-\delta)\|Q\|^{\frac{1-s_c}{s_c}}_{L^2}\|\nabla Q\|_{L^2}.
	\end{align}
	then there exists $\delta^{'}=\delta^{'}(\delta)>0$  so that
	\begin{align}
	\displaystyle \int_{\R^3} |\nabla f|^2 {\rm d}x + \left(\frac{3-b}{p+1}-\frac{3}{2}\right) \int_{\R^3} \frac{|f|^{p+1}}{\vert x \vert^b} {\rm d}x \ge \delta^{\prime} \int_{\R^3} \frac{|f|^{p+1}}{\vert x \vert^b} {\rm d}x. 
	\end{align}
\end{lemma}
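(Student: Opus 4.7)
The plan is to reduce this to the sharp Gagliardo–Nirenberg–Hardy inequality associated to the ground state $Q$. First I would set $\alpha := \tfrac{3(p-1)+2b}{2}$, which is exactly the power of $\|\nabla f\|_{L^2}$ in the scaling-invariant GN estimate
\[
\int |x|^{-b}|f|^{p+1}\,dx \;\le\; C_{\mathrm{opt}}\,\|\nabla f\|_{L^2}^{\alpha}\,\|f\|_{L^2}^{p+1-\alpha},
\]
and I would note that the hypotheses $1+\tfrac{4-2b}{3}<p<5-2b$ force $\alpha>2$ and $p+1-\alpha>0$. The key algebraic simplification is the identity
\[
\frac{3-b}{p+1}-\frac{3}{2} \;=\; -\frac{\alpha}{p+1},
\]
so writing $A=\|\nabla f\|_{L^2}^2$, $M=\|f\|_{L^2}^2$, $B=\int|x|^{-b}|f|^{p+1}$, the quantity on the left-hand side of the desired inequality is simply $I(f):=A-\tfrac{\alpha}{p+1}B$.

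Next I would exploit the Pohozaev identities for the elliptic equation $\Delta Q-Q+|x|^{-b}|Q|^{p-1}Q=0$ (multiplying by $Q$ and by $x\cdot\nabla Q$ and integrating) to obtain
\[
A_Q+M_Q=B_Q,\qquad \tfrac{1}{2}A_Q+\tfrac{3}{2}M_Q=\tfrac{3-b}{p+1}B_Q,
\]
which combine to give $B_Q=\tfrac{p+1}{\alpha}A_Q$ and in particular $I(Q)=0$. This determines the best constant:
\[
C_{\mathrm{opt}}=\frac{B_Q}{A_Q^{\alpha/2}M_Q^{(p+1-\alpha)/2}}=\frac{p+1}{\alpha}\,A_Q^{1-\alpha/2}\,M_Q^{-(p+1-\alpha)/2}.
\]

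Plugging the sharp GN into $\tfrac{\alpha}{p+1}B$ and factoring out $A$ yields
\[
\frac{\alpha}{p+1}B \;\le\; A\cdot\Big(\tfrac{A}{A_Q}\Big)^{\!\alpha/2-1}\Big(\tfrac{M}{M_Q}\Big)^{\!(p+1-\alpha)/2}.
\]
The ratio of exponents $(p+1-\alpha)/(\alpha-2)$ is precisely $(1-s_c)/s_c$ — this is a direct computation using $s_c=\tfrac{3}{2}-\tfrac{2-b}{p-1}$ — so the right-hand side equals $A\cdot\bigl[A\,M^{(1-s_c)/s_c}\big/(A_Q\,M_Q^{(1-s_c)/s_c})\bigr]^{\alpha/2-1}$. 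Squaring the hypothesis gives $A\,M^{(1-s_c)/s_c}\le(1-\delta)^2 A_Q M_Q^{(1-s_c)/s_c}$, and since $\alpha/2-1>0$ we obtain
\[
\tfrac{\alpha}{p+1}B \;\le\; A\,(1-\delta)^{\alpha-2}.
\]
This immediately yields $I(f)\ge A\bigl[1-(1-\delta)^{\alpha-2}\bigr]$; combined with the trivial consequence $A\ge \tfrac{\alpha}{p+1}B$ of the same estimate, it produces $I(f)\ge \delta'B$ with $\delta':=\tfrac{\alpha}{p+1}\bigl[1-(1-\delta)^{\alpha-2}\bigr]>0$.

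The only conceptual obstacle is the bookkeeping that links the \emph{scaling-invariant} threshold $M^{(1-s_c)/s_c}A\le(1-\delta)^2 A_Q M_Q^{(1-s_c)/s_c}$ appearing in the hypothesis to the \emph{GN-homogeneous} combination $(A/A_Q)^{\alpha/2-1}(M/M_Q)^{(p+1-\alpha)/2}$; once the identity $(p+1-\alpha)/(\alpha-2)=(1-s_c)/s_c$ is verified the proof is essentially a one-line application of the sharp inequality. Everything else (Pohozaev, the coefficient collapse $\tfrac{3-b}{p+1}-\tfrac{3}{2}=-\tfrac{\alpha}{p+1}$, positivity of $\alpha-2$) is routine checking.
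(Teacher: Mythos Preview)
Your argument is correct and is exactly the standard route via the sharp Gagliardo--Nirenberg inequality for the weight $|x|^{-b}$; the algebraic checks (the coefficient collapse $\tfrac{3-b}{p+1}-\tfrac{3}{2}=-\tfrac{\alpha}{p+1}$, the Pohozaev relations giving $B_Q=\tfrac{p+1}{\alpha}A_Q$, and the exponent identity $(p+1-\alpha)/(\alpha-2)=(1-s_c)/s_c$) are all verified as you indicate. The paper does not supply its own proof of this lemma---it is quoted from \cite{Cam, CC}---and your write-up is precisely the argument one finds in those references, so there is nothing to compare.
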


\begin{lemma}[Kinetic energy on balls, \cite{Cam, CC}]\label{ge}
	Let $\chi$ be a smooth cutoff to the set $\{|x|\le1/2\}$ and set $\chi_R(x):=\chi(\frac{x}{R})$. If $f\in H^1$, then
	\begin{align}
	\displaystyle \int_{\R^3} |\nabla (\chi_R f)|^2 {\rm d}x = \displaystyle \int_{\R^3} \chi_R^2|\nabla f|^2 {\rm d}x - \displaystyle \int_{\R^3} \chi_R \Delta(\chi_R)|f|^2 {\rm d}x. 
	\end{align}
	In particular,
	\begin{align}
	\left|\displaystyle \int_{\R^3} |\nabla(\chi_Rf)|^2 {\rm d}x - \displaystyle \int_{\R^3} \chi_R^2|\nabla f|^2 {\rm d}x \right| \leq \frac{c}{R^2} \| f \|_{L^2}^2.
	\end{align}
\end{lemma}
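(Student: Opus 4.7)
The plan is to establish the identity by a direct application of the product rule followed by one integration by parts, and then to bound the resulting error term using the scaling properties of $\chi_R$.

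First I would expand the gradient using the product rule: since $\chi_R$ is real-valued, one has
\begin{equation*}
|\nabla(\chi_R f)|^2 = \chi_R^2|\nabla f|^2 + \chi_R\,\nabla(|f|^2)\cdot\nabla\chi_R + |f|^2|\nabla\chi_R|^2,
\end{equation*}
where the middle term arises from $\bar f\,\nabla f+f\,\nabla\bar f = \nabla(|f|^2)$. Integrating over $\mathbb{R}^3$ and applying integration by parts to the middle term,
\begin{equation*}
\int \chi_R\nabla\chi_R\cdot\nabla(|f|^2)\,dx = -\int |\nabla\chi_R|^2|f|^2\,dx - \int \chi_R\Delta(\chi_R)|f|^2\,dx,
\end{equation*}
the $|\nabla\chi_R|^2|f|^2$ pieces cancel and the stated identity follows. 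The integration by parts is justified by first working with smooth compactly supported $f$ and then extending to $f\in H^1$ by density, since both sides are continuous in the $H^1$ topology.

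For the quantitative bound, I would use the scaling $\chi_R(x)=\chi(x/R)$, which gives $\Delta\chi_R(x)=R^{-2}(\Delta\chi)(x/R)$. Since $\chi$ is a fixed smooth cutoff, $\chi\,\Delta\chi$ is bounded and compactly supported, hence $\|\chi_R\Delta(\chi_R)\|_{L^\infty}\le cR^{-2}$ for a constant $c$ depending only on $\chi$. Combining with the identity just established,
\begin{equation*}
\left|\int|\nabla(\chi_Rf)|^2\,dx-\int\chi_R^2|\nabla f|^2\,dx\right| = \left|\int\chi_R\Delta(\chi_R)|f|^2\,dx\right| \le \frac{c}{R^2}\|f\|_{L^2}^2,
\end{equation*}
which is the desired inequality.

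No step here is really an obstacle; the entire argument is a one-line calculation once one notices that the cross term $\chi_R\nabla\chi_R\cdot\nabla(|f|^2)$ and the boundary-type term $|\nabla\chi_R|^2|f|^2$ are engineered to cancel after integration by parts. The only mild subtlety is ensuring the integration by parts is valid for $H^1$ functions without additional regularity, which as noted is handled by density.
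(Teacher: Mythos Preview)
Your proof is correct and is exactly the standard argument for this identity. Note that the paper does not supply its own proof of this lemma; it is simply quoted with a citation to \cite{Cam, CC}, so there is nothing to compare against beyond confirming that your computation is the canonical one.
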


\begin{lemma}[Local coercivity, \cite{Cam, CC, DD}]\label{lc}
	let u be a global $H^1$-solution to \eqref{INLC} satisfying  \eqref{qz}. There exists $\bar{R}= \bar{R}(\delta,M[u_0]),Q,s_c) >0$ such that, for any $R\geq \bar{R},$
	\begin{align}
	\|\chi_Ru(t)\|^{\frac{1-s_c}{s_c}}_{L^2}\|\nabla(\chi_Ru(t))\|_{L^2}\leq(1-\delta)\|Q\|^{\frac{1-s_c}{s_c}}_{L^2}\|\nabla Q\|_{L^2}.
	\end{align}
	In particular, by Lemma \ref{g}, there exists $\delta^{'}=\delta^{'}(\delta)>0$ such that
	\begin{align}
	\displaystyle \int_{\R^3} |\nabla(\chi_Ru(t))|^2 {\rm d}x + \left(\frac{3-b}{p+1}-\frac{3}{2}\right) \displaystyle
	\int_{\R^3} \frac{|\chi_Ru(t)|^{p+1}}{\vert x \vert^b} {\rm d}x \geq \delta^{\prime} \displaystyle \int_{\R^3} \frac{|\chi_R u|^{p+1}}{\vert x \vert^b} {\rm d}x. 
	\end{align}
\end{lemma}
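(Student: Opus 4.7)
The plan is to reduce the first inequality to the a priori bound \eqref{qz} by showing that the localized function $\chi_R u(t)$ has mass and kinetic energy that differ from those of $u(t)$ only by a small commutator error that can be made arbitrarily small by taking $R$ large; the second inequality then follows by plugging $f = \chi_R u(t)$ into Lemma \ref{g}.

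First I would control the mass of the truncation: since $0 \leq \chi_R \leq 1$, mass conservation gives $\|\chi_R u(t)\|_{L^2} \leq \|u(t)\|_{L^2} = \|u_0\|_{L^2}$. Next I would apply Lemma \ref{ge} to $f = u(t)$ to obtain
\begin{equation*}
\|\nabla(\chi_R u(t))\|_{L^2}^2 = \int \chi_R^2 |\nabla u(t)|^2 \, dx - \int \chi_R \Delta(\chi_R)|u(t)|^2 \, dx \leq \|\nabla u(t)\|_{L^2}^2 + \frac{C}{R^2}\|u_0\|_{L^2}^2.
\end{equation*}
Combining these two bounds and invoking the a priori strict inequality \eqref{qz},
\begin{equation*}
\|u(t)\|_{L^2}^{(1-s_c)/s_c}\|\nabla u(t)\|_{L^2} < (1-2\delta)\|Q\|_{L^2}^{(1-s_c)/s_c}\|\nabla Q\|_{L^2},
\end{equation*}
yields
\begin{equation*}
\|\chi_R u(t)\|_{L^2}^{2(1-s_c)/s_c}\|\nabla(\chi_R u(t))\|_{L^2}^2 \leq (1-2\delta)^2 \bigl(\|Q\|_{L^2}^{(1-s_c)/s_c}\|\nabla Q\|_{L^2}\bigr)^2 + \frac{C\,\|u_0\|_{L^2}^{2/s_c}}{R^2}.
\end{equation*}

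Choosing $\bar{R} = \bar{R}(\delta, M(u_0), Q, s_c)$ large enough that the $R^{-2}$ remainder is smaller than $\bigl[(1-\delta)^2 - (1-2\delta)^2\bigr]\bigl(\|Q\|_{L^2}^{(1-s_c)/s_c}\|\nabla Q\|_{L^2}\bigr)^2$, I get the first claimed inequality for all $R \geq \bar{R}$. The second inequality is then immediate: $f = \chi_R u(t)$ satisfies the hypothesis of Lemma \ref{g} with the parameter $\delta$, so Lemma \ref{g} produces $\delta' = \delta'(\delta) > 0$ such that the stated coercive bound holds.

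There is no real obstacle beyond bookkeeping: the only delicate point is verifying that the $R^{-2}$ commutator remainder truly can be absorbed into the gap between $(1-2\delta)$ and $(1-\delta)$, which is why the a priori bound \eqref{qz} is stated with the stronger factor $1-2\delta$. This is a standard "truncation-preserves-subcritical-threshold" argument, tailored here to the inhomogeneous nonlinearity through the application of Lemma \ref{g}.
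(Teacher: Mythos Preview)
Your argument is correct and is exactly the standard one from the cited references: the paper itself states this lemma without proof, deferring to \cite{Cam, CC, DD}, and what you wrote is precisely the routine truncation argument used there---bound the localized mass by the full mass, use Lemma~\ref{ge} to control the commutator in the kinetic energy by $CR^{-2}M(u_0)$, and absorb this remainder into the gap between $(1-2\delta)$ and $(1-\delta)$ provided by \eqref{qz}; the second inequality then follows by applying Lemma~\ref{g} to $f=\chi_R u(t)$.
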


\section{Proof of the scattering criterion}
In this section, we will prove Theorem \ref{SC}, which reduces the proof of Theorem \ref{MR} to the verfication of condition \eqref{ASU3} on local mass in long time.
The history of Theorem \ref{SC} comes back to Dodson-Murphy \cite{DM} for classical nonlinear Schr\"odinger equation in 3D and then F. Meng \cite{M} for the nonlinear Hartree equation in 5D.
In \cite{CC}, it was proved for the \eqref{INLS} without potential.
For the \eqref{INLC} with a potential, it was proved in \cite{DD} under the radial assumption.
Here, we prove the result for non-radial initial data and the full subcritical range in dimension three.

\begin{lemma}\label{3.1}
Let $0<b<1$ and $1+\frac{4-2b}{3}<p<1+4-2b.$ Let $V:\R^3\rightarrow\mathbb{R}$
satisfy \eqref{ASU1}, \eqref{ASU2} and  $u$ be a (possibly non-radial) $H^1$-solution to \eqref{pb}. If $u$ satisfies \eqref{ASU3} for some $0<\epsilon<1$, then there exist $\gamma$, $T>0$ such that
$$\|e^{-i(t-T)\mathcal{H}}u(T)\|_{S(\dot{H}^{s_c},[0,+\infty))}\lesssim \epsilon^{\gamma}.$$
\end{lemma}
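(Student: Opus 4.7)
The plan is to follow the Dodson–Murphy-style scattering-criterion argument used in \cite{CC} for the INLS and in \cite{DD} for the radial potential case, adapting it to the non-radial setting. The starting point is the Duhamel identity
\begin{align*}
e^{-i(t-T)\mathcal{H}}u(T)\;=\;e^{-it\mathcal{H}}u_0\;-\;i\int_0^T e^{-i(t-s)\mathcal{H}}F(u(s))\,ds,\qquad F(u):=-|x|^{-b}|u|^{p-1}u.
\end{align*}
The $S(\dot H^{s_c},[T,+\infty))$-norm of the first summand tends to zero as $T\to\infty$, since $\|e^{-it\mathcal{H}}u_0\|_{S(\dot H^{s_c},\mathbb R)}\lesssim\|u_0\|_{H^{s_c}}<\infty$ by Strichartz (Lemma \ref{Dispersive and Strichartz}) and the time integral is absolutely continuous, so this term is $\le\epsilon^\gamma$ once $T$ is large enough. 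The core of the proof is therefore the Duhamel integral.

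I would next introduce two scales $R=\epsilon^{-\alpha}$ and $T^\ast=\epsilon^{-\beta}$ with exponents $\alpha>\beta+2$ to be fixed, and use hypothesis \eqref{ASU3} to pick $T\ge T^\ast$ satisfying $\int_{B(0,R)}|u(T,x)|^2\,dx\le\epsilon^2$. The localized-mass identity
\begin{align*}
\frac{d}{dt}\!\int\chi_R^2|u|^2\,dx\;=\;2\int\nabla(\chi_R^2)\cdot\Im(\bar u\,\nabla u)\,dx,
\end{align*}
combined with $|\nabla\chi_R|\lesssim R^{-1}$ and the a priori bound \eqref{pb}, propagates the smallness backward: $\int\chi_R^2|u(t)|^2\,dx\lesssim\epsilon^2$ holds for every $t\in[T-T^\ast,T]$ provided $R\gtrsim T^\ast E^2\epsilon^{-2}$, which is exactly the condition $\alpha>\beta+2$.

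I would then split the Duhamel integral at $T-T^\ast$. On the far-past piece $I_1=[0,T-T^\ast]$, every $(t,s)$ with $t\ge T$ and $s\in I_1$ satisfies $t-s\ge T^\ast$; the dispersive estimate \eqref{dis}, interpolated to an appropriate $L^r$ with $(q,r)\in\Lambda_{s_c}$ away from $r=6$, combined with the crude pointwise bound $\|F(u(s))\|_{L^{r'}}\lesssim\|u\|_{H^1}^p\lesssim E^p$ from Lemma \ref{nle}, produces a contribution of the form $(T^\ast)^{-\gamma_1}=\epsilon^{\beta\gamma_1}$ for some $\gamma_1>0$. On the near-past piece $I_2=[T-T^\ast,T]$ I would invoke the inhomogeneous Strichartz estimate to reduce the problem to controlling $\|F(u)\|_{S'(\dot H^{-s_c},I_2)}$, and then decompose $F(u)=F(u)\chi_R+F(u)(1-\chi_R)$. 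The exterior piece gains $|x|^{-b}\le R^{-b}=\epsilon^{\alpha b}$ on $\{|x|\ge R\}$, while the interior piece is handled by Hölder together with a Gagliardo–Nirenberg interpolation trading the propagated localized smallness $\|\chi_R u\|_{L^\infty_t L^2_x(I_2)}\lesssim\epsilon$ against the $H^1$-bound $E$; this yields a factor $\epsilon^{\gamma_2}$ with $\gamma_2>0$, any powers of $T^\ast$ picked up from the Strichartz time integration over $I_2$ being absorbable by taking $\beta$ small relative to $\gamma_2$.

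The main obstacle will be the bookkeeping of all these exponents. One has to choose $(\alpha,\beta)$ so that simultaneously (i) the localized mass propagation on $I_2$ is valid, (ii) the far-past dispersive integration genuinely converges with a positive power of $T^\ast$, (iii) the near-past interior piece delivers a net positive power of $\epsilon$ uniformly across the full subcritical range $1+\tfrac{4-2b}{3}<p<1+4-2b$, and (iv) all intermediate pairs stay inside the admissible sets $\Lambda_{s_c}$ and $\Lambda_{-s_c}$ and avoid the excluded endpoint $r=6$. The parameter $\theta$ in Lemma \ref{nle} and the openness of those admissible ranges provide enough flexibility to balance these requirements and conclude $\|e^{-i(\cdot-T)\mathcal H}u(T)\|_{S(\dot H^{s_c},[T,+\infty))}\lesssim\epsilon^\gamma$ for a suitable $\gamma=\gamma(b,p)>0$.
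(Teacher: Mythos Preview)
Your overall architecture matches the paper's: Duhamel identity, split the integral at $T-T^\ast$ (with $T^\ast=\epsilon^{-\beta}$), handle the near-past piece by localized-mass propagation plus an interior/exterior splitting that exploits the decay of $|x|^{-b}$ outside $B(0,R)$ in place of radiality, and push the far-past piece down via dispersion. The near-past treatment is essentially what the paper does.

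The gap is in the far-past piece. A direct use of the dispersive estimate together with the crude bound $\|F(u(s))\|_{L^{r'}_x}\lesssim E^p$ does \emph{not} produce a negative power of $T^\ast$ for any $(q,r)\in\Lambda_{s_c}$. Indeed, writing $\sigma:=3(\tfrac12-\tfrac1r)=\tfrac2q+s_c$, one gets
\[
\Bigl\|\int_0^{T-T^\ast}|t-s|^{-\sigma}\,ds\Bigr\|_{L^q_t[T,\infty)}\;\sim\;(T^\ast)^{\,1-\sigma+1/q}\;=\;(T^\ast)^{\,1-s_c-1/q},
\]
and since every $(q,r)\in\Lambda_{s_c}$ has $q>2/(1-s_c)$, the exponent $1-s_c-1/q$ is \emph{positive}; you obtain growth in $T^\ast$, not decay, so your claimed $\gamma_1>0$ is impossible by this route. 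The paper avoids this by an interpolation trick: it rewrites the far-past Duhamel piece as a difference of two free evolutions,
\[
G_2(t)=e^{-it\mathcal H}\bigl[e^{i(T-\epsilon^{-\alpha})\mathcal H}u(T-\epsilon^{-\alpha})-u_0\bigr],
\]
controls the $L^2$-admissible norm $\|G_2\|_{L^h_tL^k_x}$ by Strichartz and mass, and interpolates this against a dispersive bound taken at the near-endpoint pair $\bigl(\tfrac1\delta,\,\tfrac{6}{1+4\delta}\bigr)$ (where Lemma~\ref{nle} supplies $\|F(u)\|_{L^{(6/(1+4\delta))'}}\lesssim E^p$), obtaining $\|G_2\|_{L^q_tL^r_x}\lesssim E^{1-s_c}\epsilon^{\alpha\delta s_c}$. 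You need this extra ingredient; pure dispersion on $\Lambda_{s_c}$ pairs does not close.
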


\begin{proof}
First, we fix the parameters $\alpha, \gamma >0$ (to be chosen later). By Strichartz estimate, there exists $T_0> \epsilon^{-\alpha}$ such that
\begin{align}\label{es1}
	\|e^{-it\mathcal{H}}u_0\|_{S(\dot{H}^{s_c},[T_0,+\infty))}\leq \epsilon^{\gamma}.
\end{align}
For $ T \ge T_0 $ to be chosen later, define $ I_1 := [T - \epsilon^{-\alpha}, T] $, $ I_2 := [0, T - \epsilon^{-\alpha}] $ and let $ \eta $ denote a smooth, spherically symmetric function which equals 1 on $ B(0, 1/2) $ and 0 outside $ B(0, 1) $. 
Set $ \eta_R(x) := \eta(x/R) $ for any $ R > 0 $. 

From Duhamel's formula
\[
u(T) = e^{iT\mathcal{H}} u_0 - i\int_0^T e^{i(t-s)\mathcal{H}} \frac{|u|^{p-1}u}{\vert x \vert^b} {\rm d}s,  
\]	
we obtain
\[
e^{i(t-T)\mathcal{H}} u(T) = e^{-it\mathcal{H}} u_0 + iG_1+ iG_2, 
\] 
where
\begin{align*}
G_j(t) :=\int_{I_j} e^{i(t-s)\mathcal{H}} \frac{|u|^{p-1}u}{\vert x \vert^b} {\rm d}s, \qquad j = 1, 2.
\end{align*}

{\bf Estimate on $G_1$.}
By hypothesis \eqref{ASU3}, we can fix $T\geq T_0$ such that
\begin{align}
\displaystyle \int_{\R^3} \eta_R(x) |u(T,x)|^2 {\rm d}x \lesssim \epsilon^2.
\label{es2}
\end{align}
Multiplying \eqref{INLC} by $\eta_R \bar{u}$, taking the imaginary part and integrating by parts, we obtain
\[ 
\pa_t \displaystyle \int_{\R^3} \eta_R(x)|u(t,x)|^2 {\rm d}x \lesssim \frac{1}{R}, 
\] 
so that, by \eqref{es2}, for $ t \in I_1 $,
\[ 
\displaystyle \int_{\R^3} \eta_R(x) |u(t,x)|^2 {\rm d}x \lesssim \epsilon^2 + \frac{\epsilon^{-\alpha}}{R}. 
\] 
If $ R \ge \epsilon^{-(\alpha+2)}$, then we have $ \| \eta_R u \|_{L^{\infty}_{I_1} L^2_x}\lesssim \epsilon $. 

We choose $ (s, l) \in \Lambda_{s_c} $ as
\[
s=\frac{4(p-1)(p+1-\theta)}{(p-1)[3(p-1)+2b]-\theta[3(p-1)-4+2b]},\quad l=\frac{3(p-1)(p+1-\theta)}{(p-1)(3-b)-\theta(2-b)}. 
\]
By H\"older's inequality and Sobolev embedding, for $ t \in I_1 $,
\begin{align}
\Big\| \eta_R \frac{|u|^{p-1}u}{\vert x \vert^b} \Big\|_{L_x^{l^{\prime}}} 
\lesssim \|u(t)\|^{\theta}_{H_x^1} \|u(t)\|^{p-1-\theta}_{L_x^l} \|\eta_Ru(t)\|_{L_x^l} 
\lesssim \|\eta_Ru(t)\|_{L_x^l}.
\label{es3}
\end{align}

Now, let $ 0 < \mu < 1 $ satisfy $ \frac1l = \frac{\mu}2 + \frac{1-\mu}6 $, we have 
\begin{align}
\| \eta_R u(t) \|_{L_x^l} 
\leq \| u(t) \|_{L_x^6}^{1-\mu} \| \eta_R u(t) \|_{L_x^2}^{\mu} 
\lesssim \epsilon^{\mu},
\label{es4}
\end{align}
uniformly for $ t\in I_1 $. 
We now exploit the decay of the nonlinearity.
By H\"older's inequality and Sobolev embedding again, for $ R \ge 0 $ sufficiently large (depending on $ \epsilon $) and $ t \in I_1 $,
\begin{align}
\Big\| (1-\eta_R) \frac{|u|^{p-1}u}{\vert x \vert^b} \Big\|_{L_x^{l^{\prime}}} 
\lesssim & ~ \Big\| \frac{|u|^{p-1}u}{\vert x \vert^b} \Big\|_{L_x^{l^{\prime}}(|x| > \frac{R}2)} \nonumber \\
\lesssim & ~ \Big\| \frac1{\vert x \vert^b} \Big\|_{L_x^{l_1}(|x| > \frac{R}2)} \| u \|_{L_x^{\theta l_2}}^{\theta} \| u \|_{L_x^l}^{p-\theta} \nonumber \\
\lesssim & ~ \frac1{R^{b l_1 - 3}} \| u \|_{H_x^1}^p 
\lesssim \epsilon^{\mu},
\label{es5}
\end{align}
where $ l_1 $ and $ l_2 $ are such that $ b l_1 > 3 $, $ 2 < \theta l_2 < \frac{3(p-1)}{2-b} $ and
\[ 
\frac1{l^{\prime}} = \frac1{l_1} + \frac1{l_2} + \frac{p - \theta}l. 
\]

Using the Strichartz estimates, together with estimates \eqref{es3}, \eqref{es4}, and \eqref{es5}, we obtain 
\begin{align}
& ~ \left\| \int_{I_1} e^{i(t-s)\mathcal{H}} \frac{|u|^{p-1}u(s)}{\vert x \vert^b} {\rm d}s \right\|_{S(\dot{H}^{s_c}, [T, \infty))} \nonumber \\
\lesssim & ~ \Big\| \frac{|u|^{p-1}u}{\vert x \vert^b} \Big\|_{S^{\prime}(\dot{H}^{-s_c}, I_1)}
\lesssim \Big\| \frac{|u|^{p-1}u}{\vert x \vert^b} \Big\|_{L_t^{s^{\prime}}(I_1, L_x^{l^{\prime}})} \nonumber \\ 
\lesssim & ~ \Big\| \eta_R \frac{|u|^{p-1}u}{\vert x \vert^b} \Big\|_{L_t^{s^{\prime}}(I_1, L_x^{l^{\prime}})} + \Big\| (1 - \eta_R) \frac{|u|^{p-1}u}{\vert x \vert^b} \Big\|_{L_t^{s^{\prime}}(I_1, L_x^{l^{\prime}})} \nonumber \\
\lesssim & ~ |I_1|^{1/s^{\prime}} \epsilon^{\mu} 
= \epsilon^{\mu - \alpha / s^{\prime}} 
= \epsilon^{\mu/2},
\label{es6}
\end{align}
where we choose $ \alpha := s^{\prime}\mu/2 $.

{\bf Estimate on $G_2$.}
Let $ (q, r) \in \Lambda_{s_c} $. 
Define, for small $ \delta \ge 0 $,
\[ 
\frac1h = \left( \frac1{1-s_c} \right) \left[ \frac1q - \delta s_c \right], 
\] 
and
\[ 
\frac1k = \left( \frac1{1-s_c} \right) \left[ \frac1r - s_c \left( \frac{3 - 2 - 4\delta}6 \right) \right]. 
\] 
We see that $ (h, k) \in \Lambda_0 $. 
By interpolation,
\[ 
\| G_2 \|_{L_t^q([T,+\infty), L_x^r)} 
\lesssim \| G_2 \|_{L_t^h([T,+\infty), L_x^k)}^{1-s_c} \| G_2 \|_{L_t^{\frac1{\delta}}([T,+\infty), L_x^{\frac6{3 - 2 - 4\delta}})}^{s_c}. 
\] 
By the dispersive estimate \eqref{dis} and Lemma \ref{nle}, one can get
\begin{align}
& ~ \| G_2 \|_{L_t^{\frac1{\delta}}([T, \infty), L_x^{\frac6{3-2-4\delta}}(\R^3))} \nonumber \\
\lesssim & ~ \left\| \int_{I_2} | t - s |^{-(1 + 2\delta)} \Big\| \frac{|u|^{p-1} u}{\vert x \vert^b} \Big\|_{L_x^{\frac6{3+2+4\delta}}} {\rm d}s \right\|_{L_t^{\frac1{\delta}}([T, \infty))} \nonumber \\
\lesssim & ~ \| u \|_{L_t^{\infty} H_x^1}^p \left\| \left( t - T + \epsilon^{-\alpha} \right) \right\|_{L_t^{\frac1{\delta}}([T, \infty))}
\lesssim \epsilon^{\alpha \delta}.
\label{G2}
\end{align}
Using Duhamel's principle, rewrite
\[ 
iG_2 = e^{it\mathcal{H}} \left[ e^{-i(T-\epsilon^{-\alpha})\mathcal{H}} u(T-\epsilon^{-\alpha}) - u_0 \right], 
\] 
thus, by the Strichartz estimate and \eqref{G2},
\begin{align}
& ~ \| G_2 \|_{L_t^q([T, +\infty), L_x^r)} \nonumber \\
\lesssim & ~ \left\| e^{-it\mathcal{H}} \left[ e^{-i(T-\epsilon^{-\alpha})\mathcal{H}} u(T-\epsilon^{-\alpha}) - u_0 \right] \right\|_{L_t^h([T, +\infty), L_x^k)}^{1-s_c} \| G_2 \|_{L_t^{\frac1{\delta}}([T, +\infty), L_x^{\frac6{3-2-4\delta}})}^{s_c} \nonumber \\
\lesssim & ~ \|u\|^{1-s_c}_{L^{\infty}_tL^2_x}\|G_2\|^{s_c}_{L_t^{\frac1{\delta}}([T,+\infty), L_x^{\frac6{3-2-4\delta}})}
\lesssim \epsilon^{\alpha\delta}.
\end{align}

Therefore, defining $ \gamma := \min \{ \mu/2, \alpha \delta s_c \} $ and recalling that
\[ 
e^{-i(t-T)\mathcal{H}} u(T) = e^{-it\mathcal{H}} u_0 + iG_1 + iG_2, 
\] 
we have
\[ 
\| e^{-i(t-T)\mathcal{H}} u(T) \|_{S(\dot{H}^{s_c}, [T,\infty))} \lesssim \epsilon^{\gamma}. 
\] 
Then, we complete the proof of Lemma \ref{3.1}.
\end{proof}

\begin{proof}[Proof of Theorem \ref{SC}]
Choose $ \epsilon > 0 $ sufficiently small, by Theorem \ref{LWP} and \ref{SDGWP}, 
\[ 
\| e^{-it\mathcal{H}} u(T) \|_{S(\dot{H}^{s_c}, [0, \infty))} 
= \| e^{-i(t-T)\mathcal{H}} \|_{S(\dot{H}^{s_c}, [T,\infty))} 
\lesssim \epsilon^{\gamma} 
\lesssim \delta_{sc}, 
\] 
where $ \delta_{sc} $ is given in Theorem \ref{SDGWP}.
Thus by small data scattering theory, $ u $ scatters forward in time in $ H^1 $, as desired.
\end{proof}

\section{Proof of Theorem \ref{MR}}
In this section, we will prove the Morawetz estimate, and use it together with scattering criterion to  obtain the main result Theorem \ref{MR}.

\noindent

\begin{lemma}[Virial/Morawetz identity]\label{VMi}
Let $ a : \R^3 \rightarrow \R $ be a real-valued weight.
If $ \|\nabla a\|_{L^{\infty}} < \infty $, define
\[
Z(t) = 2 \Im \int_{\R^3} \bar{u} \nabla u \cdot \nabla a {\rm d}x.
\]
Then, if $ u $ is a solution to \eqref{INLC}, we have the following identity
\begin{align}
\frac{\rm d}{{\rm d}t} Z(t)
= & ~ \left( \frac4{p+1} - 2 \right) \int_{\R^3} \frac{|u|^{p+1}}{|x|^b} \Delta a {\rm d}x - \frac{4b}{p+1} \int_{\R^3} \frac{|u|^{p+1}}{|x|^{b+2}} x \cdot \nabla a {\rm d}x \nonumber \\
& ~ - \int_{\R^3} |u|^2 \Delta \Delta a {\rm d}x + 4 \Re \int_{\R^3} a_{i,j} \overline{u_i} u_j {\rm d}x - 2 \int_{\R^3} \nabla a \cdot \nabla V |u|^2 {\rm d}x.
\end{align}
\end{lemma}

\begin{proof}
	We omit the proof of Lemma \ref{VMi} here for it is classical and standard.
\end{proof}

Let $ \phi : [0, \infty) \rightarrow [0, 1] $ be a smooth function satisfying
\begin{eqnarray}
\phi(r)=
\begin{cases}
1, & ~ r \leq \frac12, \\
0, & ~ r \ge 1.
\end{cases}
\label{cutoff-function}
\end{eqnarray}
Define $ \omega : [0, \infty) \rightarrow [0, \infty) $ by
\[
\omega(r) := \int_0^r \int_0^s \phi(t) {\rm d}t{\rm d}s.
\]
For given $ R > 0 $, we define a radial function
\begin{align*}
a(r) := R^2 \omega \Big( \frac{r}R \Big), \qquad r = |x|.
\end{align*}
It is easy to see $ a(x) \sim |x|^2 $ for $ |x| \leq \frac{R}2 $.
Moreover, we have the following properties:
\[
0 \leq a^{\prime\prime}(r) \leq 2, ~ 0 \leq \Delta a \leq 10, \qquad \forall ~ r \geq 0, ~ \forall ~ x \in \R^3.
\]

\begin{lemma}[Morawetz estimate]\label{de}
For $ 0 < b < 1 $ and $ 1 + \frac{4-2b}3 < p < 1 + 4 - 2b $ in \eqref{INLC}, let $ V : \R^3 \rightarrow \R $ satisfy \eqref{ASU1}, $ V \ge 0 $, $ x \cdot \nabla V \leq 0 $.
If $ u_0 \in H^1(\R^3) $ satisfy \eqref{q1}, \eqref{q2}, and \eqref{q3}, then for any $ T \ge 0 $ and any $ R \ge \bar{R} $ as in Lemma \ref{lc}, the corresponding global solution to the focusing problem \eqref{INLC} satisfies
\begin{align}
\frac1T \int_0^T \int_{|x| \leq \frac{R}2} \frac{|u(t,x)|^{p+1}}{|x|^b} {\rm d}x {\rm d}t
\lesssim_{u_0, Q} \frac{R}T + \frac1{R^b} + o_R(1).
\end{align}
	
\begin{proof}
Using the Cauchy-Schwarz inequality and the definition of $ Z(t) $, we have
\begin{align}
\sup_{t \in \R} |Z(t)| \lesssim R.
\label{zb}
\end{align}
Denote the angular derivative as
\[
\not\nabla u = \nabla u - \frac{x \cdot \nabla u}{|x|^2} x.
\]
Here, the angular derivative is not necessarily zero, since the solution is not radial.
So, we have
\begin{align}
\frac{\rm d}{{\rm d}t} Z(t)
= & ~ 8 \left[ \int_{|x| \leq \frac{R}2} |\nabla u|^2 + \left( \frac{3-b}{p+1} - \frac32 \right) \int_{|x| \leq \frac{R}2} \frac{|u|^{p+1}}{|x|^b} {\rm d}x \right] \nonumber \\
& ~ + \int_{|x| \geq \frac{R}2} \left[ \left( \frac4{p+1} - 2 \right) \Delta a - \frac{4b}{1+p} \frac{x \cdot \nabla a}{|x|^2} \right] \frac{|u|^{p+1}}{|x|^b} {\rm d}x \nonumber \\
& ~ + 4 \int_{|x| \geq \frac{R}2} \pa^2_r a |\pa_r u|^2 {\rm d}x + 4 \int_{|x| \geq \frac{R}2} \frac{\pa_r a}{|x|} |\not \nabla u|^2 {\rm d}x \nonumber \\
& ~ - 4 \int_{|x| \leq \frac{R}2} x \cdot \nabla V |u|^2 {\rm d}x - 2 \int_{|x| \geq \frac{R}2} \nabla a \cdot \nabla V |u|^2 {\rm d}x.
\end{align}
Observing that $ \| \Delta \Delta a \|_{L^{\infty}} \lesssim \frac1{R^2} $, so we have
\begin{align}
\frac{\rm d}{{\rm d}t} Z(t)
\geq & ~ 8 \left[ \int_{|x| \leq \frac{R}2} |\nabla u|^2 {\rm d}x + \left( \frac{3-b}{p+1} - \frac32 \right) \int_{|x| \leq \frac{R}2} \frac{|u|^{p+1}}{|x|^b} {\rm d}x \right] \nonumber \\
& ~ - \frac{c E^{\frac{p+1}2}}{R^b} - \frac{c}{R^2} M[u_0] - 2 \int_{|x| \geq \frac{R}2} \nabla a \cdot \nabla V |u(t, x)|^2 {\rm d}x.
\label{es7}
\end{align}

Define a smooth cut-off function satisfies
\begin{eqnarray}
\chi^{\rho}(x) =
\begin{cases}
1, & ~ |x| \leq \frac12, \\
0, & ~ |x| \ge \frac12 + \frac1{\rho}.
\end{cases}
\end{eqnarray}
It is not difficult to observe the identity as follow:
\begin{align}
& ~ \int_{|x| \leq \frac{R}2} | \nabla u |^2 {\rm d}x + \left( \frac{3-b}{p+1} - \frac32 \right) \int_{|x| \leq \frac{R}2} \frac{|u|^{p+1}}{|x|^b} {\rm d}x \nonumber \\
= & ~ \left[ \int_{\R^3} \left( \chi_R^{\rho} \right)^2 | \nabla u |^2 {\rm d}x + \left( \frac{3-b}{p+1} - \frac32 \right) \int_{\R^3} \left( \chi_R^{\rho} \right)^2 \frac{|u|^{p+1}}{|x|^b} {\rm d}x \right] \nonumber \\
& ~ - \left[ \int_{\frac{R}2 < |x| \leq \frac{R}2 + \frac{R}{\rho}} \left( \chi_R^{\rho} \right)^2 |\nabla u|^2 {\rm d}x + \left( \frac{3-b}{p+1} - \frac32 \right) \int_{\frac{R}2 < |x| \leq \frac{R}2 + \frac{R}{\rho}} \left( \chi_R^{\rho} \right)^2 \frac{|u|^{p+1}}{|x|^b} {\rm d}x \right] \nonumber \\
= & ~ \left[ \int_{\R^3} \left| \chi_R^{\rho} \nabla u \right|^2 {\rm d}x + \left( \frac{3-b}{p+1} - \frac32 \right) \int_{\R^3} \frac{\left| \chi_R^{\rho} u \right|^{p+1}}{\vert x \vert^b} {\rm d}x \right] \nonumber \\
& ~ - \left[ \int_{\frac{R}2 < |x| \leq \frac{R}2 + \frac{R}{\rho}} \left( \chi_R^{\rho} \right)^2 |\nabla u|^2 {\rm d}x + \left( \frac{3-b}{p+1} - \frac32 \right) \int_{\frac{R}2 < |x| \leq \frac{R}2 + \frac{R}{\rho}} \left( \chi_R^{\rho} \right)^2 \frac{|u|^{p+1}}{|x|^b} {\rm d}x \right] \nonumber \\
& ~ - \left( \frac32 - \frac{3-b}{p+1} \right) \int_{\R^3} \left( \left( \chi_R^{\rho} \right)^{p+1} - \left( \chi_R^{\rho} \right)^2 \right) \frac{|u|^{p+1}}{|x|^b} {\rm d}x \nonumber \\
=: & ~ \left[ \int_{\R^3} \left| \chi_R^{\rho} \nabla u \right|^2 {\rm d}x + \left( \frac{3-b}{p+1} - \frac32 \right) \int_{\R^3} \frac{\left| \chi_R^{\rho} u \right|^{p+1}}{|x|^b} {\rm d}x \right] - I_{\rho} - II_{\rho}.
\label{es8}
\end{align}

According to Lemma \ref{ge}, we get
\begin{align}
& ~ \int_{\R^3} \left| \chi_{R}^{\rho} \nabla u \right|^{2} {\rm d}x + \left( \frac{3-b}{p+1} - \frac32 \right) \int_{\R^3} \frac{\left| \chi_R^{\rho} u \right|^{p+1}}{|x|^b} {\rm d}x \nonumber \\
\ge & ~ \int_{\R^3} \left| \nabla \left( \chi_{R}^{\rho} u \right) \right|^2 {\rm d}x + \left( \frac{3-b}{p+1} - \frac32 \right) \int_{\R^3} \frac{\left| \chi_R^{\rho} u \right|^{p+1}}{|x|^b} {\rm d}x - \frac{c}{R^2} M \left[ u_0 \right].
\label{es9}
\end{align}
The inequalities \eqref{es7}, \eqref{es8} and \eqref{es9} can be rewritten as
\begin{align}
\frac{\rm d}{{\rm d}t} Z(t)
\geq & ~ 8 \left[ \int_{\R^3} \left| \nabla \left( \chi_R^{\rho} u \right) \right|^2 {\rm d}x + \left( \frac{3-b}{p+1} - \frac32 \right) \int_{\R^3} \frac{\left| \chi_R^{\rho} u \right|^{p+1}}{|x|^b} {\rm d}x \right] \nonumber \\
& ~ -\frac{c E^{\frac{p+1}2}}{R^b} - \frac{c}{R^2} M \left[ u_0 \right] - 8 I_{\rho} - 8 II_{\rho} - 2 \int_{|x| \geq \frac{R}2} \nabla a \cdot \nabla V |u|^2 {\rm d}x.
\label{es10}
\end{align}
Noting the fact
\[
\nabla a = R \frac{x}{|x|} \omega^{\prime} \Big( \frac{r}R \Big), \qquad \text{when} ~ |x| \geq \frac{R}2,
\]
and the derivative of $ \omega $ is bounded which is independent of $ R $, by Sobelev embedding and interpolation, we obtain
\[
\| u \|_{L^m}
\lesssim \| u \|^{\vartheta}_{L^2} \| u \|_{L^6}^{1-\vartheta}
\lesssim \| u \|_{H^1}, \qquad 2 \le m \le 6.
\]
It is easy to see that $ 2 \le 2 r^{\prime} \le 6 $ when $ r \ge \frac32 $.
Therefore,
\begin{align}
\left| \int_{|x| \ge \frac{R}2} \nabla a \cdot \nabla V |u(t, x)|^2 {\rm d}x \right|
\lesssim & ~ \int_{|x|\ge \frac{R}2} | \nabla a \cdot \nabla V | |u(t, x)|^2 {\rm d}x \nonumber \\
\lesssim & ~ \int_{\R^3} | x \cdot \nabla V | |u(t, x)|^2 {\rm d}x \nonumber \\
\lesssim & ~ \| x \cdot \nabla V \|_{L_x^r(\R^3)} \| u(t, x) \|_{L_x^{2r^{\prime}}}^2 \nonumber \\
< & ~ \infty, \qquad \forall ~ t \in \R.
\label{es11}
\end{align}
Hence,
\begin{align}
\lim_{R \to \infty} \sup_{t \in \R} \left| \int_{|x| \ge \frac{R}2} \nabla a \cdot \nabla V |u(t, x)|^2 {\rm d}x \right| = 0.
\end{align}
Using Lemma \ref{lc} and recalling that $ 0 < b < 1 $, we can write \eqref{es10} as
\begin{equation*}
\int_{\R^3} \frac{\left| \chi_{R}^{\rho} u(t, x) \right|^{p+1}}{|x|^b} {\rm d}x
\lesssim_{u_0, Q} \frac{\rm d}{{\rm d}t} Z(t) + \frac1{R^b} + 8 I_{\rho} + 8 II_{\rho} + o_R(1).
\end{equation*}

Now, by Dominated Convergence Theorem, we obtain $ I_{\rho} + II_{\rho} \rightarrow 0 $ as $ \rho \rightarrow +\infty $.
Hence,
\begin{equation*}
\int_{|x| \leq \frac{R}2} \frac{|u(t, x)|^{p+1}}{|x|^b} {\rm d}x \lesssim_{u_0, Q} \frac{\rm d}{{\rm d}t} Z(t) + \frac1{R^b} + o_R(1).
\end{equation*}
Then, integrating over time and using \eqref{zb}, we have
\begin{align*}
\frac{1}{T} \int_{0}^{T} \int_{|x| \leq \frac{R}{2}} \frac{|u(t, x)|^{p+1}}{|x|^b} {\rm d}x {\rm d}t \lesssim_{u_0,Q} & ~ \frac{1}{T} \sup _{t \in[0, T]} |Z(t)| + \frac1{R^{b}} + o_R(1) \\
\lesssim_{u_0, Q} & ~ \frac{R}T + \frac1{R^b} + o_R(1).
\end{align*}
Therefore, the proof is completed.
\end{proof}
		
\end{lemma}
\begin{lemma}[Energy evacuation]\label{ee}
For $ 0 < b < 1 $ and $ 1 + \frac{4-2b}3 < p < 1 + 4 - 2b $ in \eqref{INLC}, let $ V : \R^3 \rightarrow \R $ satisfy \eqref{ASU1}, $ V \ge 0 $, $ x \cdot \nabla V \leq 0 $.
If $ u_0 \in H^1(\R^3) $ satisfy \eqref{q1}, \eqref{q2}, and \eqref{q3}, then there exist a sequence of times $ t_n \rightarrow \infty $ and a sequence of  $ R_n \rightarrow \infty $ such that
\begin{equation}
\lim_{n \to \infty} \int_{|x| \le R_n} \frac{|u(t_n)|^{p+1}}{|x|^b} {\rm d}x = 0.
\label{lim=0}
\end{equation}
\end{lemma}

\begin{proof}
Choosing $ T_n = R^3_n $ and applying Lemma \ref{de}, we have
\begin{equation*}
\frac{1}{R_n^3} \int_{0}^{R^3_n} \int_{|x| \leq \frac{R_n}{2}} \frac{|u(t)|^{p+1}}{|x|^b} {\rm d}x {\rm d}t
\lesssim_{u_0,Q} \frac{1}{R^2_n} + \frac{1}{R_n^{b}} + o_R(1).
\end{equation*}
According to the Mean Value Theorem, there exist sequences $ t_n \rightarrow \infty $ and $ R_n \rightarrow \infty $ such that \eqref{lim=0} holds.
\end{proof}

\begin{proof}[Proof of Theorem \ref{MR}]	
Take $ t_n \rightarrow \infty $, and a sequence of $ R_n \rightarrow \infty $ as in Lemma \ref{ee}.
Fix $ \epsilon \ge 0 $ and $ R \ge 0 $ as in Theorem \ref{SC}.
Choosing $ n $ sufficiently large, such that $ R_n \ge R $, by H\"older's inequality yields
\[
\int_{|x| \leq R} |u(x,t_n)|^2 {\rm d}x 
\lesssim R^{\frac{2b+3(p-1)}{p+1}} \left( \int_{|x|\leq R_n} \frac{|u(t_n)|^{p+1}}{\vert x \vert^b} {\rm d}x \right)^{\frac{2}{p+1}}
\rightarrow 0
\]
as $ n \rightarrow \infty $.
Therefore, by Theorem \ref{SC}, $ u $ scatters forward in time.
\end{proof}

{\bf Acknowledgements:} {\rm We are grateful to Professor Yi Zhou for his guidance and encouragement, which greatly support our original manuscript.
Moreover, the authors also would like to thank Professor Deng Zhang for his invaluable discussion and suggestions which helped authors write this paper.}

\begin{center}

\end{center}

\end{document}